\def\sqr#1#2{{\vcenter{\vbox{\hrule height.#2pt
              \hbox{\vrule width.#2pt height#1pt \kern#1pt \vrule width.#2pt}
          \hrule height.#2pt}}}}
\def\sqr#1#2{{\vcenter{\vbox{\hrule height.#2pt
              \hbox{\vrule width.#2pt height#1pt \kern#1pt \vrule width.#2pt}
              \hrule height.#2pt}}}}
\def\3n{\negthinspace \negthinspace \negthinspace }
\def\2n{\negthinspace \negthinspace }
\def\1n{\negthinspace }
\def\={\buildrel \triangle \over =}
\def\exp{\mathop{\rm exp}}
\def\sup{\mathop{\rm sup}}
\def\inf{\mathop{\rm inf}}
\def\inf{\hbox{\rm inf$\,$}}
\def\sup{\hbox{\rm sup}}
\def\inf{\hbox{\rm inf}}
\def\|{\Big |}
\def\({\Big (}
\def\){\Big )}
\def\[{\Big[}
\def\]{\Big]}
\def\be{\begin{equation}}
\def\bel{\begin{equation}\label}
\def\ee{\end{equation}}
\def\bt{\begin{theorem}}
\def\bcd{\begin{condition}}
\def\ecd{\end{condition}}
\def\et{\end{theorem}}
\def\bc{\begin{corollary}}
\def\ec{\end{corollary}}
\def\bde{\begin{definition}}
\def\ede{\end{definition}}
\def\bl{\begin{lemma}}
\def\el{\end{lemma}}
\def\bp{\begin{proposition}}
\def\ep{\end{proposition}}
\def\bex{\begin{example}}
\def\eex{\end{example}}
\def\br{\begin{remark}}
\def\er{\end{remark}}
\def\ba{\begin{array}}
\def\ea{\end{array}}
\def\ed{\end{document}}
\def\square#1{\vbox{\hrule\hbox{\vrule height#1%
     \kern#1\vrule}\hrule}}
\def\rectangle#1#2{\vbox{\hrule\hbox{\vrule height#1%
     \kern#2\vrule}\hrule}}
\font\tenbb=msbm10 \font\sevenbb=msbm7 \font\fivebb=msbm5
\newtheorem{lemma}{Lemma}[section]
\newtheorem{remark}{Remark}[section]
\newtheorem{example}{Example}[section]
\newtheorem{theorem}{Theorem}[section]
\newtheorem{corollary}{Corollary}[section]
\newtheorem{definition}{Definition}[section]
\newtheorem{proposition}{Proposition}[section]
\newtheorem{condition}{Condition}[section]
\begin{document}

\title{Fully coupled mean-field FBSDEs with jumps and related optimal control problems}
\author{ Wenqiang Li$^1$\footnote{W. Li acknowledges the financial support partly by Natural Science Foundation of Shandong Province (ZR2017MA015), Doctoral Scientific Research  fund of Yantai University (No. SX17B09), A Project of Shandong  Province Higher Educational Science and Technology Program (No. J17KA162).} and  Hui Min$^2$\footnote{Corresponding author. H. Min acknowledges the financial support partly by Beijing Natural Science Foundation (No. 1184013), China Postdoctoral Science Foundation Funded Project (No. 2016M600020), Beijing Postdoctoral Research Foundation, Chaoyang Postdoctoral Research Foundation. }\\
{\small $^1$School of Mathematics and Information Sciences, Yantai University, Yantai, P.~R.~China.}\\
   {\small $^2$College of Applied Sciences,
        Beijing University of Technology, Beijing, P.~R.~China.}\\
{\small {\it E-mails: wenqiangli@ytu.edu.cn; huimin@bjut.edu.cn.}}\\
}

\date{November 1, 2018}
\maketitle
\noindent{\bf{Abstract}}

This paper study a type of fully coupled mean-field forward-backward stochastic differential equations  with jumps under the monotonicity condition, including the existence and the uniqueness of the solution of our equation as well as the continuity property of the solutions with respect to the parameters. Then we establish the stochastic maximum principle for the corresponding optimal control problems and give the applications to mean-variance portfolio problems and linear-quadratic problems, respectively.


\noindent{{\bf AMS Subject classification:} 60H10; 60H30; 35K65

\noindent{{\bf Keywords:} Mean-field backward stochastic differential equation with jumps; fully coupled forward-backward stochastic differential equation; monotonicity conditions; stochastic maximum principle; mean-variance protfolio problem; linear-quadratic problem.

\section{Introduction}

Forward-backward stochastic differential equations (FBSDEs, for short) have attracted significant attention
because of their wide range of applications, from solving nonlinear partial differential equations (PDEs, for short), pricing American options to describing some optimization problems (refer to, \cite{MY}). Inspired by the introduction of a recursive stochastic utility function in \cite{DE1}, Antonelli \cite{A}  first investigated the existence and the uniqueness of the solution of FBSDEs driven by Brownian motion with requiring the small enough Lipschitz constant of the coefficients. In order to deal with fully coupled FBSDEs on an arbitrarily given time interval, Ma, Protter, Yong \cite{MPY} introduced a ``four-step scheme" approach which combines probability methods and PDE methods. Using this method, they obtained the existence and the uniqueness of the solution with deterministic and non-degenerate diffusion coefficients. Peng and Wu \cite{PW}
used a purely probabilistic continuation method to study fully coupled FBSDEs with additional monotonicity condition on the coefficients. There are also many other methods to study the solution of FBSDEs, see Delarue \cite{D} and Zhang \cite{Z} for numerical approaches, Ma, et al. \cite{MWZZ} for a unified approach, etc. For more details about fully coupled FBSDEs, the readers also refer to Ma and Yong \cite{MY}, or  Yong \cite{Y} and the references therein.

On the other hand,  mean-field limits are widely applied to many diverse areas such as statistical physics,
quantum mechanics and quantum chemistry. Based on this, Buckdahn, et al. \cite{BLP2} obtained a new type of BSDEs, namely mean-field BSDEs. In \cite{BLP1}, Buckdahn, Li and Peng made an in-depth study of such type of BSDEs and got the existence and the uniqueness of the solution of mean-field BSDEs, as well as a comparison theorem. They also established the link between the solution of this mean-field BSDEs and some nonlocal PDEs.
Min, Peng, Qin \cite{MPQ} generalized their work to fully coupled mean-field FBSDEs cases. Barles, Buckdahn, Pardoux \cite{BBP} studied a new type of BSDEs driven by Brownian motion and a Poisson random measure, namely BSDEs with jumps and showed the connection with a system of parabolic integro-PDEs. Royal \cite{Royer} gave a strict comparison theorem for BSDEs with jumps and the relation to non-linear expectation.
Li, Min \cite{LM} investigated a new type of mean-field BSDEs with jumps, namely mean-field BSDEs with jumps involving value function and obtained the related dynamic programming principle.

Inspired by the above works, one of our aim is to study a type of fully coupled mean-field FBSDEs with jumps.
 To the best of our knowledge, no corresponding works have been done until now. To be more specific, we consider the following fully coupled mean-field FBSDEs with jumps:
\begin{equation}\label{Eq1.1}
\left\{
\begin{aligned}
dx(t)=&\ \int_EE'\[b\big(t,\lambda(t,e),(\lambda(t,e))'\big)\]\lambda(de)dt
 +\int_EE'\[\sigma\big(t,\lambda(t,e),(\lambda(t,e))'\big)\]\lambda(de)dB_t\\
&\ +\int_EE'\[h\big(t,\lambda(t-,e),(\lambda(t-,e))',e\big)\]\widetilde{\mu}(dt,de),\\
-dy(t)=&\ \int_EE'\[f\big(t,\lambda(t,e),(\lambda(t,e))'\big)\]\lambda(de)dt
 -z(t)dB_t-\int_Ek(t,e)\widetilde{\mu}(dt,de),\, t\in[0,T],\\
x(0)=&\ a,\\
y(T)=&\ E'[\Phi(x(T),(x(T))')],
\end{aligned}
\right.
\end{equation}
where $$\lambda(t,e)=\big(x(t),y(t),z(t),k(t,e)\big),\ \lambda(t-,e)=\big(x(t-),y(t-),z(t),k(t,e)\big),$$
$b, \sigma,h, f, \Phi$ are mappings with appropriate dimensions, $T\geq 0$ is an arbitrarily fixed number. Under the classical assumption (H3.1) and monotonicity assumption (H3.2), the existence and the uniqueness of the solution of our fully coupled mean-field FBSDEs with jumps (\ref{Eq1.1}) are obtained
by using a purely probabilistic continuation method (See, Theorem 3.1). Furthermore, we study the continuity of the solution of equation (\ref{Eq1.1}) relying on parameters under our assumptions (See, Theorem 4.1).

Another aim of this paper is to study the related optimal control problems for the controlled fully coupled mean-field FBSDEs with jumps (\ref{Eq1.1}) in a Markovian framework. Our motivation of this part is followed from many theoretical works and a wide range of applications with respect to the stochastic maximum principle of the stochastic control problems under jump-diffusion framework. Framstad, Oksendal, Sulem \cite{FOS} proved a sufficient maximum principle for the optimal control of jump diffusions and gave applications to optimization problems in a financial market. Oksendal, Sulem \cite{OS2009} and Shi, Wu \cite{SW}  studied  the maximum principle for optimal control of FBSDEs with jumps   by using different approaches, respectively. Shen, Siu \cite{SS} generalized their work to mean-field cases.

Let us give the specific description of our control problem, where the dynamic has the following form
\begin{equation}\label{Eq2018102401}
\left\{
\begin{aligned}
dx^v(t)=&\ \int_EE'[b\big(t,\pi^v(t,e),v(t)\big)]\lambda(de)dt +\int_EE'[\sigma\big(t,\pi^v(t,e),v(t)\big)]\lambda(de)dB_t\\
&\ +\int_EE'[h\big(t,\pi^v(t-,e),v(t),e\big)]\widetilde{\mu}(dtde),\\
-dy^v(t)=&\ \int_EE'[f\big(t,\pi^v(t,e),v(t)\big)]\lambda(de)dt-z^v(t)dB_t-\int_Ek^v(t,e)\widetilde{\mu}(dtde),\\
x^v(0)=&\ a,\ \ y^v(T)=E'[\Phi\big(x^v(T),(x^v(T))'\big)],
\end{aligned}
\right.
\end{equation}
where $$\pi^v(t,e)=\big(x^v(t), y^v(t), z^v(t), k^v(t,e), (x^v(t))',(y^v(t))',(z^v(t))', (k^v(t,e))'\big),$$
$$\pi^v(t-,e)=\big(x^v(t-), y^v(t-), z^v(t), k^v(t,e), (x^v(t-))',(y^v(t-))', (z^v(t))', (k^v(t,e))'\big),$$
and the cost functional has the following form
\begin{equation}\label{equ 2018102402}
\begin{aligned}
 J(v(\cdot))=E\Big[ & \displaystyle \int_0^T\int_EE'\big[g\big(t,\pi^v(t,e),v(t)\big)\big]\lambda(de)dt+E'[\varphi\big(x^v(T),(x^v(T))'\big)]+\gamma\big(y^v(0)\big)\Big],
\end{aligned}
\end{equation}
where all coefficients of the dynamic and the cost functional are given deterministic functions (See, Section 5 for more details). Our control domain is convex and we get the necessary  and sufficient condition for the optimality of the control with the help of a convex perturbation (See, Theorem 5.1 and 5.2). Moreover, we apply these results to a mean-variance portfolio selection mixed with a mean-field recursive utility and a linear-quadratic optimal control problem, respectively.

The rest of the paper is organized as follows. In Section 2, we introduce the framework of our study and some results on mean-field forward and backward SDEs with jumps. In Section 3, we prove the existence and the uniqueness of solution of fully coupled mean-field FBSDEs with jumps.We present
the continuity of solutions of our equation with respect to the parameters in Section 4. Section 5 is devoted to discussing the necessary and sufficient condition of the optimal control problem for the related fully coupled mean-field FBSDEs with jumps. In Section 6 we give two applications to illustrate the results of Section 5. A corresponding lemma (used in the proof of Theorem 3.1) and its proof are given in Appendix.

\section{Preliminaries}

 Let $(\Omega,\mathcal{F},P)$ be a probability space which is the completed product of the Wiener space $(\Omega_1,\mathcal{F}_1,P_1)$ and the Poisson space $(\Omega_2,\mathcal{F}_2,P_2)$:\\
$\bullet \,\,(\Omega_1,\mathcal{F}_1,P_1)$ is a classical Wiener space, where $\Omega_1=C_0(\mathbb{R};\mathbb{R}^d)$ is the set of continuous functions from $\mathbb{R}$ to $\mathbb{R}^d$ with value 0 in time 0, $\mathcal{F}_1$ is the completed Borel $\sigma$-algebra over $\Omega_1$, and $P_1$ is the Wiener measure such that $B_s(\omega)=\omega_s$, $s\in\mathbb{R}_+$, $\omega\in\Omega_1$, and $B_{-s}(\omega)=\omega(-s)$, $s\in\mathbb{R}_+$, $\omega\in\Omega_1$, are two independent $d$-dimensional Brownian motions. The natural filtration $\{\mathcal{F}_s^B,s\geq0\}$ is generated by $\{B_s\}_{s\geq0}$ and augmented by all $P_1$-null sets, i.e.,
$$\mathcal{F}_s^B=\sigma\{B_r,r\in(-\infty,s]\}\vee\mathcal{N}_{P_1},\,\,s\geq0.$$
$\bullet \,\,(\Omega_2,\mathcal{F}_2,P_2)$ is a Poisson space. We denote by $p:\,D_p\subset\mathbb{R}\rightarrow E$ the point functions, where $D_p$ is a countable subset of the real line $\mathbb{R}$, $E=\mathbb{R}^l\setminus\{0\}$ is equipped with its Borel $\sigma$-field $\mathcal{B}(E)$. We introduce the counting measure $\mu(p,dtde)$ on $\mathbb{R}\times E$ as follows:
$$\mu(p,(s,t]\times\Delta)=\sharp\{r\in D_p\cap(s,t]:p(r)\in\Delta\},\,\Delta\in\mathcal{B}(E),\,s,t\in\mathbb{R},\,s<t,$$
where $\sharp$ denotes the cardinal number of the set. We identify the point function $p$ with $\mu(p,\cdot)$. Let $\Omega_2$ be the set of all point functions $p$ on $E$, and $\mathcal{F}_2$ be the smallest $\sigma$-field on $\Omega_2$. The coordinate mappings $p\rightarrow\mu(p,(s,t]\times\Delta)$, $s,t\in\mathbb{R}$, $s<t$, $\Delta\in\mathcal{B}(E)$, are measurable with respect to $\mathcal{F}_2$. On the measurable space $(\Omega_2,\mathcal{F}_2)$ we consider the probability measure $P_2$ such that the canonical coordinate measure $\mu(p,dtde)$ becomes a Poisson random measure with the compensator $\widehat{\mu}(dtde)=dt\lambda(de)$; the process $\{\widetilde{\mu}((s,t]\times A)=(\mu-\widehat{\mu})((s,t]\times A)\}_{s\leq t}$ is a martingale, for any $A\in\mathcal{B}(E)$ satisfying $\lambda(A)<\infty$. Here $\lambda$ is supposed to be a $\sigma$-finite measure on $(E,\mathcal{B}(E))$ with $\int_E(1\wedge|e|^2)\lambda(de)<\infty$. The filtration $\{\mathcal{F}_t^\mu\}_{t\geq0}$ generated by the coordinate measure $\mu$ is introduced by setting:
$$\dot{\mathcal{F}}_t^\mu=\sigma\{\mu((s,r]\times \Delta):-\infty<s\leq r\leq t,\Delta\in\mathcal{B}(E)\},\,t\geq0,$$
and taking the right-limits $\mathcal{F}_t^\mu=(\bigcap\limits_{s>t}\dot{\mathcal{F}}_s^\mu)\vee\mathcal{N}_{P_2},\,t\geq0$, augmented by all the $P_2$-null sets. At last, we set $(\Omega,\mathcal{F},P)=(\Omega_1\times \Omega_2,\mathcal{F}_1\otimes\mathcal{F}_2,P_1\otimes P_2)$, where $\mathcal{F}$ is completed with respect to $P$, and the filtration $\mathbb{F}=\{\mathcal{F}_t\}_{t\geq0}$ is generated by
$$\mathcal{F}_t:=\mathcal{F}_t^B\otimes\mathcal{F}_t^\mu,\,t\geq0,\,\mbox{augmented\,\,by\,\,all\,\,P-null\,\,sets}.$$
\indent For any $n\geq1$, $|z|$ denotes the Euclidean norm of $z\in\mathbb{R}^n$. Fix $T>0$, we also shall introduce the following three spaces of processes which will be used frequently in what follows:\\

\indent$\mathcal{S}_{\mathbb{F}}^2(0,T;\mathbb{R})$ := $\{(\psi_t)_{0\leq t\leq T}$ real-valued $\mathbb{F}$-adapted c\`{a}dl\`{a}g process : $E[\sup_{0\leq t\leq T}|\psi_t|^2]<+\infty\}$;\\

\indent$\mathcal{H}_{\mathbb{F}}^2(0,T;\mathbb{R}^n)$ := $\{(\psi_t)_{0\leq t\leq T}$ $\mathbb{R}^n$-valued $\mathbb{F}$-progressively measurable process : \\ \indent\indent\qquad\qquad$||\psi||^2=E[\int_0^T|\psi_t|^2dt]<+\infty\}$;\\

\indent$\mathcal{K}_{\mathbb{F},\lambda}^2(0,T;\mathbb{R}^n)$ := $\{K: \Omega\times[0,T]\times E \rightarrow \mathbb{R}^n$ $\mathcal{P}\otimes\mathcal{B}(E)$-measurable mapping :\\
\indent\indent\qquad\qquad $|K|_{L^2(\lambda)}^2=E[\int_0^T\int_E|K_t(e)|^2\lambda(de)dt]<+\infty\}$.\footnote{$\mathcal{P}$ denotes the $\sigma$-algebra of $\mathcal{F}_t$-predictable sub-sets of $\Omega\times[0,T]$.}\\

\indent For the reader's convenience, let us first introduce the framework of mean-field SDEs with jumps and mean-field BSDEs  with jumps which will be used in the follows. For more details we refer to  \cite{LM}.

\indent Let $(\bar{\Omega}, \bar{\mathcal{F}},\bar{P})=(\Omega',\mathcal{F}',P')\otimes(\Omega,\mathcal{F},P)=(\Omega,\mathcal{F},P)\otimes(\Omega,\mathcal{F},P)$ be the (non-completed) product of $(\Omega, \mathcal{F},P)$  with itself. Let us endow the product space $(\bar{\Omega}, \bar{\mathcal{F}},\bar{P})$ with the filtration $\bar{\mathbb{F}}=\{ {\bar{\mathcal{F}}}_t=\mathcal{F} \otimes \mathcal{F}_t, 0 \leq t \leq T\}$.\\
\indent Given a random variable $\xi$ over $(\Omega,\mathcal{F},P)$, we denote by $\xi'$ its (under $\bar{P}$) independent copy on $(\Omega',\mathcal{F}',P')$: $ \xi'(\omega)=\xi(\omega),\,\omega\in\Omega'(=\Omega)$. Extending $\xi,\,\xi'$ canonically to $\bar{\Omega}$, $\xi(\omega',\omega)=\xi(\omega)$, $\xi'(\omega',\omega)=\xi'(\omega')$, $(\omega,\omega')\in\bar{\Omega}=\Omega'\times\Omega$, we have for all nonnegative Borel functions $f: \mathbb{R}^2\rightarrow\mathbb{R}_+$, $E'[f(\xi',\xi)]=\int_{\Omega'}f(\xi'(\omega'),\xi)P'(d\omega')=E[f(\xi,x)]|_{x=\xi}$.\\
\indent The driving coefficient of our mean-field BSDE with jumps is a mapping
\begin{equation}\nonumber
f=f(\bar{\omega},t,y,z,k,y',z',k'): \bar{\Omega}\times[0,T]\times\mathbb{R}^m\times\mathbb{R}^{m\times d}\times L^2(E,\mathcal{B}(E),\lambda;\mathbb{R}^m)\times\mathbb{R}^m\times\mathbb{R}^{m\times d}\times L^2(E,\mathcal{B}(E),\lambda;\mathbb{R}^m)\rightarrow\mathbb{R}^m
 \end{equation}
which is $\bar{\mathcal{P}}$-measurable, for each $(y,z,k,y',z',k')$ in $\mathbb{R}^m\times\mathbb{R}^{m\times d}\times L^2(E,\mathcal{B}(E),\lambda;\mathbb{R}^m)\times\mathbb{R}^m\times\mathbb{R}^{m\times d}\times L^2(E,\mathcal{B}(E),\lambda;\mathbb{R}^m)$. Moreover, we also make the following assumptions on $f$:\\
(i) There exists a constant $C\geq0$ such that, $\bar{P}\mbox{-a.s.}$, for all $t\in[0,T]$, $y_1,y_2 ,y_1',y_2'\in\mathbb{R}^m$, $z_1,z_2 ,z_1',z_2'\in\mathbb{R}^{m\times d}$, $k_1,k_2 ,k_1',k_2'\in L^2(E,\mathcal{B}(E),\lambda;\mathbb{R}^m)$,\\
$\indent |f(t,y_1,z_1,k_1,y_1',z_1',k_1')-f(t,y_2,z_2,k_2,y_2',z_2',k_2')|$\\
$\indent\leq C(|y_1-y_2|+|y_1'-y_2'|+|z_1-z_2|+|z_1'-z_2'|+|k_1-k_2|_{L^2(\lambda)}+|k_1'-k_2'|_{L^2(\lambda)}).$\\
(ii) $|f(\cdot,0,0,0,0,0,0)|\in\mathcal{H}_{\bar{\mathbb{F}}}^2(0,T;\mathbb{R}^m)$.\qquad\qquad\indent\indent
\indent\indent\indent\qquad\indent\indent\indent(H2.1)

\begin{lemma}\label{Thm2.1}
Under the assumption {\rm(H2.1)}, for any random variable $\xi\in L^2(\Omega,\mathcal{F}_T,P)$, the mean-field BSDE with jumps
 \begin{equation}
 \begin{aligned}
y(t)=\xi+\int_t^TE'[f(s,y(s),z(s),k(s),y(s)',z(s)',k(s)')]ds-\int_t^Tz(s)dB_s-\int_t^T\int_E k(s,e)\widetilde{\mu}(ds,de),\,\,0\leq t\leq T,
 \end{aligned}
\end{equation}
has a unique adapted solution
$$(y(t),z(t),k(t))_{t\in[0,T]}\in\mathcal{S}_{\mathbb{F}}^2(0,T;\mathbb{R}^m)\times
\mathcal{H}_{\mathbb{F}}^2(0,T;\mathbb{R}^{m\times d})\times\mathcal{K}_{\mathbb{F},\lambda}^2(0,T;\mathbb{R}^m).$$

\end{lemma}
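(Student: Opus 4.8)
The plan is to reduce this mean-field BSDE with jumps to a family of ordinary (non-mean-field) BSDEs with jumps and then apply the Banach fixed-point theorem. First I would freeze a triple $(y,z,k)$ in the Hilbert space $\mathcal{H}^2_{\mathbb{F}}(0,T;\mathbb{R}^m)\times\mathcal{H}^2_{\mathbb{F}}(0,T;\mathbb{R}^{m\times d})\times\mathcal{K}^2_{\mathbb{F},\lambda}(0,T;\mathbb{R}^m)$ and form the process
$$G(s):=E'\big[f\big(s,y(s),z(s),k(s),y(s)',z(s)',k(s)'\big)\big].$$
Because integrating out the independent copy under $P'$ removes all dependence on $\Omega'$, the resulting $G$ is $\mathbb{F}$-progressively measurable. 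Combining the Lipschitz bound (i), used to estimate $f$ against $f(\cdot,0,\dots,0)$, with assumption (ii) and the copy identity $E[E'[|\eta'|^2]]=E[|\eta|^2]$, one verifies $G\in\mathcal{H}^2_{\mathbb{F}}(0,T;\mathbb{R}^m)$. Since $G$ no longer depends on the unknown, the classical existence and uniqueness result for BSDEs with jumps driven by a Lipschitz generator with square-integrable data produces a unique solution $(Y,Z,K)$ of
$$Y(t)=\xi+\int_t^T G(s)\,ds-\int_t^T Z(s)\,dB_s-\int_t^T\int_E K(s,e)\,\widetilde{\mu}(ds,de),\quad 0\le t\le T.$$
This defines a map $\Gamma:(y,z,k)\mapsto(Y,Z,K)$ on the product space, and a triple solves the mean-field BSDE in the statement if and only if it is a fixed point of $\Gamma$.

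Next I would equip the product space with the $\beta$-weighted norm
$$\norm{(y,z,k)}_{\beta}^2:=E\int_0^T e^{\beta s}\big(|y(s)|^2+|z(s)|^2+|k(s)|_{L^2(\lambda)}^2\big)\,ds$$
for $\beta>0$ large and show $\Gamma$ is then a contraction. Taking two inputs, writing $\hat y,\hat z,\hat k$ for their differences and $\hat Y,\hat Z,\hat K$ for the differences of the outputs, the triple $(\hat Y,\hat Z,\hat K)$ solves a BSDE with zero terminal value and generator $\hat G(s)=G^{1}(s)-G^{2}(s)$, which by (i) satisfies
$$|\hat G(s)|\le C\big(|\hat y(s)|+|\hat z(s)|+|\hat k(s)|_{L^2(\lambda)}+E'\big[|\hat y(s)'|+|\hat z(s)'|+|\hat k(s)'|_{L^2(\lambda)}\big]\big).$$
Applying It\^o's formula to $e^{\beta s}|\hat Y(s)|^2$ and taking expectations, the compensated stochastic integrals vanish and the quadratic-variation terms reproduce $E\int_t^T e^{\beta s}(|\hat Z|^2+|\hat K|_{L^2(\lambda)}^2)\,ds$ on the left-hand side. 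Estimating the cross term $2\hat Y\hat G$ by Young's inequality, and then dominating every primed contribution via Jensen's inequality together with $E[(E'[|\hat y'|])^2]\le E[E'[|\hat y'|^2]]=E[|\hat y|^2]$, yields
$$\norm{(\hat Y,\hat Z,\hat K)}_{\beta}^2\le\frac{C_1}{\beta}\,\norm{(\hat y,\hat z,\hat k)}_{\beta}^2.$$
Choosing $\beta$ large enough that $C_1/\beta<1$ makes $\Gamma$ a strict contraction.

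Finally, since the weighted norm is equivalent to the canonical norm, the space is complete and the Banach fixed-point theorem furnishes a unique fixed point of $\Gamma$, which is precisely the unique solution of the mean-field BSDE with jumps; the regularity $Y\in\mathcal{S}^2_{\mathbb{F}}(0,T;\mathbb{R}^m)$ then follows from the standard a priori BSDE estimate and the Burkholder--Davis--Gundy inequality. I expect the main obstacle to be the bookkeeping of the mean-field terms $E'[\,\cdot\,]$ in the contraction estimate: unlike the classical situation, the generator difference carries the independent-copy variables $\hat y',\hat z',\hat k'$, and one must ensure that their weighted $L^2$-contributions are controlled by the \emph{same} norm $\norm{\cdot}_{\beta}$. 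This is exactly what the identity $E[E'[|\eta'|^2]]=E[|\eta|^2]$ delivers, and it is the only place where the mean-field structure genuinely interacts with the otherwise standard BSDE-with-jumps machinery, the jump part itself being handled routinely through the $L^2$-isometry of the compensated measure $\widetilde{\mu}$.
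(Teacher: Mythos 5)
Your argument is correct. Note that the paper itself does not prove this lemma but defers it to the reference \cite{LM}; the proof there (as in Buckdahn--Li--Peng for the continuous case) is exactly the contraction-mapping scheme you describe --- freeze $(y,z,k)$, solve the classical BSDE with jumps for the frozen generator, and contract in the $e^{\beta s}$-weighted norm using the copy identity $E[E'[|\eta'|^2]]=E[|\eta|^2]$ --- so your proposal matches the intended proof in both structure and the key estimates.
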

For the proof the readers may refer to ~\cite{LM}.
\begin{remark}\label{rem2.1}
From above notions, the generator of above mean-field BSDE has to be understood as follows
 $$ \begin{array}{rcl}
 &\qquad E'[f(s,y(s),z(s),k(s),y(s)',z(s)',k(s)')](\omega) =E'[f(s,y(s,\omega),z(s,\omega),k(s,\omega),(y(s))',(z(s))')]\\
&=\int_\Omega f(\omega',\omega,s,y(s,\omega),z(s,\omega),y(s,\omega'),z(s,\omega'))P(d\omega'),\,\,\,\,\,\,\omega\in\Omega.
 \end{array}$$
\end{remark}
\begin{remark}\label{rem2.2}
If we assume\\
  {\rm(i)} For each fixed $(x,x',e)\in\mathbb{R}^n\times\mathbb{R}^n\times E$, $b(\cdot,x,x')$, $\sigma(\cdot,x,x')$ and $\gamma(\cdot,x,x',e)$ are continuous in $t$;\\
  {\rm(ii)} There exists a $C> 0$ such that, for all $t\in[0,T]$, $x_1,x_2, x_1',x_2'\in\mathbb{R}^n$,
$$|b(t,x_1,x_1')-b(t,x_2,x_2')|+|\sigma(t,x_1,x_1')-\sigma(t,x_2,x_2')|\leq C(|x_1-x_2|+|x_1'-x_2'|);$$
{\rm{(iii)}} There exists $\rho$: $E\rightarrow\mathbb{R}^+$ with $\int_E\rho^2(e)\lambda(de)<+\infty$, such that, for any $t\in[0,T]$, $x_1,x_2, x_1',x_2'\in\mathbb{R}^n$ and $e\in E$,\\
\indent\qquad$|\gamma(t,x_1,x_1',e)-\gamma(t,x_2,x_2',e)|\leq\rho(e)(|x_1-x_2|+|x_1'-x_2'|),$\\
\indent\qquad$|\gamma(t,0,0,e)|\leq\rho(e)$.

\noindent Then, for any random variable $(t,\zeta)\in[0,T]\times L^2(\Omega, \mathcal{F}_t, P;\mathbb{R}^n)$, the following mean-field SDE with jumps:
\be
x(s)=\zeta+ \int_t^s E'[b(r,x(r),(x(r))')]dr+\int_t^sE'[\sigma(r,x(r),(x(r))')]dB_r
+\int_t^s\int_EE'[\gamma(r,x(r-),(x(r-))',e)]\widetilde{\mu}(dr,de),
 \ee
has a unique adapted solution $x\in S^2_{\mathbb{F}}(0,T; \mathbb{R}^n)$.

For more details, the reader is referred to, e.g., \cite{LM}.
\end{remark}

\section{Mean-field FBSDE with jumps: Existence and uniqueness }
We consider the following fully coupled mean-field forward-backward stochastic differential equations with jumps:
\begin{equation}\label{Eq3.1}
\left\{
\begin{aligned}
dx(t)=&\ \int_EE'[b(t,x(t),y(t),z(t),k(t,e),(x(t))',(y(t))',(z(t))',(k(t,e))')]\lambda(de)dt\\
&\ +\int_EE'[\sigma(t,x(t),y(t),z(t),k(t,e),(x(t))',(y(t))',(z(t))',(k(t,e))')]\lambda(de)dB_t\\
&\ +\int_EE'[h(t,x(t-),y(t-),z(t),k(t,e),(x(t-))',(y(t-))',(z(t))',(k(t,e))',e)]\widetilde{\mu}(dt,de),\\
-dy(t)=&\ \int_EE'[f(t,x(t),y(t),z(t),k(t,e),(x(t))',(y(t))',(z(t))',(k(t,e))')]\lambda(de)dt\\
&\ -z(t)dB_t-\int_Ek(t,e)\widetilde{\mu}(dt,de),\, t\in[0,T],\\
x(0)=&\ a,\\
y(T)=&\ E'[\Phi(x(T),(x(T))')],
\end{aligned}
\right.
\end{equation}
where the coefficients:
$$\begin{array}{lll}
&&b:\bar{\Omega} \times[0,T] \times \mathbb{R}^n \times \mathbb{R}^m \times \mathbb{R}^{m\times d}\times\mathbb{R}^m\times \mathbb{R}^n \times \mathbb{R}^m \times \mathbb{R}^{m\times d}\times\mathbb{R}^m\rightarrow \mathbb{R}^n,\\
&&\sigma:\bar{\Omega} \times[0,T]\times \mathbb{R}^n \times \mathbb{R}^m \times \mathbb{R}^{m\times d}\times\mathbb{R}^m\times \mathbb{R}^n \times \mathbb{R}^m \times \mathbb{R}^{m\times d}\times\mathbb{R}^m\rightarrow \mathbb{R}^{n\times d},\\
&&h:\bar{\Omega} \times [0,T]\times \mathbb{R}^n \times \mathbb{R}^m \times \mathbb{R}^{m\times d}\times\mathbb{R}^m\times \mathbb{R}^n \times \mathbb{R}^m \times \mathbb{R}^{m\times d}\times\mathbb{R}^m\times E\rightarrow \mathbb{R}^{n},\\
&&f:\bar{\Omega} \times [0,T]\times \mathbb{R}^n \times \mathbb{R}^m \times \mathbb{R}^{m\times d}\times\mathbb{R}^m\times \mathbb{R}^n \times \mathbb{R}^m \times \mathbb{R}^{m\times d}\times\mathbb{R}^m\rightarrow \mathbb{R}^{m},\\
&&\Phi: \bar{\Omega} \times \mathbb{R}^n\times\mathbb{R}^n\rightarrow \mathbb{R}^m.
\end{array}
$$
Given an $m\times n$ full-rank matrix $G$. We use the following notations
$$
\mathbf{\lambda}=\left( {\begin{array}{c}
   x\\
   y\\
   z\\
   k\\
\end{array}} \right), \ \  \mathbf{\widetilde{\lambda}}=\left( {\begin{array}{c}
   \widetilde{x}\\
   \widetilde{y}\\
   \widetilde{z}\\
   \widetilde{k}\\
\end{array}} \right), \ \
\mathbf{A(t,\lambda,\widetilde{\lambda},e)}=\left( {\begin{array}{c}
   -G^T f(t,\lambda,\widetilde{\lambda})\\
   Gb(t,\lambda,\widetilde{\lambda})\\
   G\sigma(t,\lambda,\widetilde{\lambda}) \\
   Gh(t,\lambda,\widetilde{\lambda},e)\\
\end{array}} \right)
$$
where $G\sigma=(G\sigma_1,\cdots,G\sigma_d)$. We use the standard inner product and Euclidean norm in $\mathbb{R}^{m\times d}$.
\begin{definition}\label{def 3.1}
A quadruple of processes $(X,Y,Z,K)$ is called an adapted solution of mean-field FBSDE with jumps (\ref{Eq3.1}), if $(X,Y,Z,K)\in \mathcal{H}^2_{\mathbb{F}}(0,T;\mathbb{R}^n\times\mathbb{R}^m\times\mathbb{R}^{m\times d})\times\mathcal{K}^2_{\mathbb{F},\lambda}(0,T;\mathbb{R}^m)$ and satisfies equation (\ref{Eq3.1}).
\end{definition}
We assume that
\begin{eqnarray*}{\rm(H3.1)}
 \begin{array}{llll}
&{\rm{(i)}}\  A(t,\lambda,\widetilde{\lambda},e) \mbox{ is uniformly Lipschitz with respect to } \lambda, \widetilde{\lambda};\\
& {\rm{(ii)}}\ \mbox{The coefficients } (b,\sigma,h,f) \mbox{ are uniformly Lipschitz in } (x,y,z,k,\widetilde{x},\widetilde{y},\widetilde{z},\widetilde{k});\\
& {\rm{(iii)}}\ \mbox{for each }\lambda,\,\widetilde{\lambda}, \ A(\cdot,\lambda,\widetilde{\lambda}) \mbox{ is in}\ \mathcal{M}_{\bar{\mathbb{F}}}^2(0,T);\\
& {\rm{(iv)}}\ \Phi(x,\widetilde{x}) \mbox{ is uniformly Lipchitz with respect to } x, \widetilde{x} \in \mathbb{R}^n;\\
& {\rm{(v)}}\ \mbox{for each }(x,y,z,k,\widetilde{x},\widetilde{y},\widetilde{z},\widetilde{k})\in \mathbb{R}^n \times \mathbb{R}^m \times \mathbb{R}^{m\times d}\times\mathbb{R}^m\times \mathbb{R}^n \times \mathbb{R}^m \times \mathbb{R}^{m\times d}\times\mathbb{R}^m, \\
 & \indent \Phi(x,\widetilde{x}) \in L^2(\bar{\Omega},\mathcal{\bar{F}}_T,\bar{P};\mathbb{R}^m);\\
   & \indent b,\sigma,h,f \mbox{ are } \bar{\mathbb{F}}\mbox{-progressively measurable}; \\
  & \indent h(\cdot,0,0,0,0,0,0,0,0,\cdot)\in \mathcal{K}^2_{\bar{\mathbb{F}},\lambda}(0,T).
  \end{array}
\end{eqnarray*}

We also need the following monotonicity assumptions. For any $ \lambda=(x,y,z,k)^T,\  \widetilde{\lambda}=(\widetilde{x},\widetilde{y},\widetilde{z},\widetilde{k})^T, \  \bar{\lambda}=(\bar{x},\bar{y},\bar{z},\bar{k})^T,$ $\widehat{l}=l-\bar{l}, \mbox{where } l=x, y, z,k, \widetilde{x}, \widetilde{y}, \widetilde{z}, \widetilde{k}, \mbox{ respectively,}$ it holds, $\bar{P}$-a.s.,
\begin{eqnarray*}{\rm(H3.2)}
 \begin{array}{ll}
\rm{(i)}& \int_E\langle A(t,\lambda,\widetilde{\lambda},e)-A(t,\bar{\lambda},\widetilde{\lambda},e),\lambda-\bar{\lambda}\rangle\lambda(de)\leq -\beta_1|\widehat{x}|^2-\beta_2(|\widehat{y}|^2+|\widehat{z}|^2)-\beta_3\int_E|\widehat{k}(e)|^2\lambda(de),\\
 \rm{(ii)}&\langle\Phi(x,\widetilde{x})-\Phi(\bar{x},\widetilde{x}),G(x-\bar{x})\rangle\geq \mu_1|\widehat{x}|^2,\\
  \end{array}
\end{eqnarray*}
where $\beta_1$, $\beta_2$, $\beta_3$ and $\mu_1$ are given nonnegative constants with
$$(1)\ \beta_1-L_AC_0>0,\ \beta_2-L_AC_0\geq0,\ \beta_3-L_A\geq 0, \mu_1-L_\Phi\lambda_1>0,$$
or
$$(2)\ \beta_1-L_AC_0=0,\ \beta_2-L_AC_0>0,\ \beta_3-L_A> 0,\ \mu_1-L_\Phi\lambda_1>0,$$
where $L_A,\,L_\Phi$ are the Lipschitz constants of $A,\,\Phi$ with respect to $\widetilde{\lambda}$, $\widetilde{x}$, respectively; $C_0$ and $\lambda_1$ satisfy $\int_E 1 \lambda(de)\leq C_0$ and $|G\widehat{l}(T)|\leq\lambda_1|\widehat{l}(T)|$, respectively.

Then we have the following  main result in this section.

\begin{theorem}\label{Thm3.1}
We assume (H3.1) and (H3.2) hold, then mean-field FBSDE with jumps (\ref{Eq3.1}) has a unique adapted solution $(X,Y,Z,K)$.
\end{theorem}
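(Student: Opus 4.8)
The plan is to carry out the purely probabilistic continuation method of Peng and Wu \cite{PW}, adapted to the mean-field and jump setting. First I would embed (\ref{Eq3.1}) into a one-parameter family indexed by $\alpha\in[0,1]$, replacing the coefficient $A$ by $A^\alpha=\alpha A+(1-\alpha)A_0$ and the terminal map $\Phi$ by $\Phi^\alpha=\alpha\Phi+(1-\alpha)\Phi_0$, where $A_0$ and $\Phi_0$ are simple (essentially linear) data that decouple the forward and backward equations and that satisfy the monotonicity (H3.2) with the \emph{same} constants $\beta_1,\beta_2,\beta_3,\mu_1$. At $\alpha=0$ the forward SDE with jumps and the backward MF-BSDE with jumps are uncoupled, so existence and uniqueness follow directly from Remark \ref{rem2.2} and Lemma \ref{Thm2.1}; at $\alpha=1$ we recover (\ref{Eq3.1}). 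The goal is then to show that solvability propagates from parameter $\alpha_0$ to $\alpha_0+\delta$ for a fixed step $\delta>0$ independent of $\alpha_0$, so that finitely many steps reach $\alpha=1$. This propagation is exactly what the auxiliary lemma deferred to the Appendix should supply.

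The engine of each step is an a priori estimate establishing a contraction. Given the unique solution at $\alpha_0$, I would freeze the extra $\delta A$-part along a previous iterate, treat it (together with forcing terms in the relevant spaces) as a source, and solve the $\alpha_0$-system; this defines a map whose fixed point solves the $(\alpha_0+\delta)$-system. To prove contractivity, take two inputs, set $\widehat{x}=x^1-x^2$ and similarly $\widehat{y},\widehat{z},\widehat{k}$, and apply It\^o's formula to $\langle G\widehat{x}(t),\widehat{y}(t)\rangle$ on $[0,T]$. Using $\widehat{x}(0)=0$, the terminal monotonicity (H3.2)(ii) on the left, and the generator monotonicity (H3.2)(i) on the right, I expect to obtain
\[
\mu_1 E|\widehat{x}(T)|^2+\beta_1 E\int_0^T|\widehat{x}(t)|^2dt+\beta_2 E\int_0^T\!\big(|\widehat{y}(t)|^2+|\widehat{z}(t)|^2\big)dt+\beta_3 E\int_0^T\!\!\int_E|\widehat{k}(t,e)|^2\lambda(de)dt\le \delta\,C\,\|\Delta\|,
\]
where $\|\Delta\|$ is the norm of the source differences. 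Combined with the standard It\^o estimate for the forward SDE with jumps (controlling $\|\widehat{x}\|$, where the gap $\mu_1-L_\Phi\lambda_1>0$ enters through the terminal term) and with Lemma \ref{Thm2.1} applied to the backward equation (controlling $\|\widehat{y}\|,\|\widehat{z}\|,\|\widehat{k}\|$), this yields a contraction once $\delta$ is small, the smallness threshold being governed by the positive gaps $\beta_1-L_AC_0$, $\beta_2-L_AC_0$, $\beta_3-L_A$ and $\mu_1-L_\Phi\lambda_1$ in (H3.2)(1)--(2). Uniqueness for (\ref{Eq3.1}) then follows by running the same estimate at $\alpha=1$ with $\delta=0$ on the difference of two solutions.

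The features that go beyond the classical Brownian case, and where the care must be concentrated, are the jumps and the mean-field coupling, both of which must be handled inside the It\^o computation above. The Poisson integral contributes an additional $\int_E$-term to $d\langle G\widehat{x},\widehat{y}\rangle$ that is matched precisely by the $\beta_3\int_E|\widehat{k}(e)|^2\lambda(de)$ dissipation in (H3.2)(i). The mean-field structure evaluates every coefficient at the independent copy $\widetilde{\lambda}$ through $E'[\cdot]$, so the terms that are Lipschitz in $\widetilde{\lambda}$ generate contributions of size $L_A E|\widehat{\widetilde{\lambda}}|$; since $\widetilde{\lambda}$ is an independent copy with the same law, I would use $E[|\widehat{\widetilde{x}}|^2]=E[|\widehat{x}|^2]$ (and likewise for the other components) together with $\int_E 1\,\lambda(de)\le C_0$ to absorb them, which is exactly why the dissipation constants must dominate $L_AC_0$ and $L_A$. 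I anticipate the main obstacle to be the bookkeeping in this a priori estimate: extracting the mean-field cross terms correctly over the product space $\bar\Omega$ and verifying that the dissipation from (H3.2) dominates them \emph{uniformly in} $\alpha$, so that the step $\delta$ can be chosen independently of $\alpha_0$ and the induction closes.
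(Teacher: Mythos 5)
Your proposal is correct and follows essentially the same route as the paper's proof: the same Peng--Wu continuation family (linear interpolation in $\alpha$ with decoupling data $f_0=\beta_1 Gx$, $\Phi_0=Gx$ plus forcing terms $\phi,\psi,\varphi,\gamma,\xi$), the same propagation lemma obtained by freezing the $\delta$-part along a previous iterate and proving contractivity via It\^o's formula applied to $\langle G\widehat{x}(t),\widehat{y}(t)\rangle$, with the mean-field cross terms absorbed using $E[|\widehat{x}'|^2]=E[|\widehat{x}|^2]$ and $\int_E 1\,\lambda(de)\le C_0$, and with the step size $\delta_0$ uniform in $\alpha_0$. The paper's uniqueness argument is likewise your estimate at $\delta=0$, supplemented---exactly as you indicate---by uniqueness of the decoupled mean-field BSDE (Lemma \ref{Thm2.1}) or SDE (Remark \ref{rem2.2}) to recover the components whose dissipation gaps in (H3.2)(1) or (2) are allowed to vanish.
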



\begin{proof}
We first prove the uniqueness of the solution. Let $\lambda(t,e)=(x(t),y(t),z(t),k(t,e))$ and $\bar{\lambda}(t,e)=(\bar{x}(t),\bar{y}(t),\bar{z}(t),\bar{k}(t,e))$ be two solutions of equation (\ref{Eq3.1}). We set $\widehat{l}=l-\bar{l}$, where $l=x(t), y(t), z(t),k(t,e),\widetilde{x}(t),$ $\widetilde{y}(t),\widetilde{z}(t),\widetilde{k}(t,e)$, respectively. Applying It\^{o}'s formula to $\langle G\widehat{x}(s),\widehat{y}(s) \rangle$, we get
 \begin{equation}\nonumber
 \begin{aligned}
&\ E \Big\langle E'[\Phi(x(T),(x(T))')]-E'[\Phi(\bar{x}(T),(\bar{x}(T))')] ,  G(x(T)-\bar{x}(T)) \Big\rangle \\
=&\ E \int_0^T \int_E\Big\langle E'[A(t,\lambda(t,e),(\lambda(t,e))',e)]-E'[A(t,\bar{\lambda}(t,e),(\bar{\lambda}(t,e))',e)], \lambda(t,e)-\bar{\lambda}(t,e)\Big\rangle  \lambda(de)dt.
\end{aligned}
\end{equation}
From (H3.2) the monotonicity assumptions of $\Phi$ and $A$, we get
\begin{equation}\label{equ 2018102201}
\begin{aligned}
(\mu_1-L_\Phi\lambda_1)E[|\widehat{x}(T)|^2] \leq &\ -E\int_0^T\[\beta_1|\widehat{x}(t)|^2+\beta_2(|\widehat{y}(t)|^2+|\widehat{z}(t)|^2)
+\beta_3\int_E|\widehat{k}(t,e)|^2\lambda(de)\]dt\\
&\ +L_AC_0E\int_0^T\[|\widehat{x}(t)|^2+|\widehat{y}(t)|^2+|\widehat{z}(t)|^2\]dt
+L_AE\int_0^T\int_E|\widehat{k}(t,e)|^2\lambda(de)dt.
\end{aligned}
\end{equation}
(1) When $\beta_1-L_AC_0>0,\ \beta_2-L_AC_0\geq0,\ \beta_3-L_A\geq 0, \ \mu_1-L_\Phi\lambda_1>0$, from (\ref{equ 2018102201}) we can get
$$|\widehat{x}(t)|^2=0, dtdP\text{-a.e.},\ |\widehat{x}(T)|^2=0, P\text{-a.s.}$$
Thus, $\Phi(x(T),(x(T))')=\Phi(\bar{x}(T),(\bar{x}(T))')$, $\bar{P}$-a.s. Therefore, from Lemma \ref{Thm2.1} it follows that
$$ ||\widehat{y}||_{\mathcal{S}_{\mathbb{F}}^2}=0,\ ||\widehat{z}||_{\mathcal{H}_{\mathbb{F}}^2}=0,\ ||\widehat{k}||_{\mathcal{K}_{\mathbb{F},\lambda}^2}=0.$$
(2) When $\beta_1-L_AC_0=0,\ \beta_2-L_AC_0>0,\ \beta_3-L_A>0,\ \mu_1-L_\Phi\lambda_1>0$, from (\ref{equ 2018102201}) we can get
$$||\widehat{y}||_{\mathcal{S}_{\mathbb{F}}^2}=0,\ ||\widehat{z}||_{\mathcal{H}_{\mathbb{F}}^2}=0,\ ||\widehat{k}||_{\mathcal{K}_{\mathbb{F},\lambda}^2}=0,\ x(T)=\bar{x}(T),\ P\text{-a.s.}$$
 From the uniqueness of solutions of mean-field SDEs with jumps (refer to \cite{LM}, or Remark \ref{rem2.2}), we get $x(t)=\bar{x}(t)$, P-a.s., for all $t\in [0, T]$.

We now prove the existence of the solution. For this we introduce
  the following mean-field FBSDEs with jumps parameterized by $\alpha\in[0,1]$:
\begin{equation}\label{Eq3.2}
\left\{
\begin{aligned}
dx^{\alpha}(t)=&\ \[\alpha \int_EE'[b(t,\chi^{\alpha}(t,e))]\lambda(de)+E'[\phi(t)]\]dt +\[\alpha \int_EE'[\sigma(t,\chi^{\alpha}(t,e))]\lambda(de)+E'[\psi(t)]\]dB_t\\
&\ +\int_E \[\alpha E'[h(t,\chi^{\alpha}(t-,e))]+E'[\varphi(t,e)]\]\widetilde{\mu}(dt,de),\\
-dy^{\alpha}(t)=&\ \[(1-\alpha)\beta_1Gx^{\alpha}(t)+\alpha\int_E E'[f(t,\chi^{\alpha}(t,e))]\lambda(de) +E'[\gamma(t)]\]dt -z^{\alpha}(t)dB_t-\int_E k^{\alpha}(t,e)\widetilde{\mu}(dt,de),\\
x^{\alpha}(0)=&\ a,\\
y^{\alpha}(T)=&\alpha E'[\Phi(x^{\alpha}(T),(x^{\alpha}(T))')]+(1-\alpha)Gx^{\alpha}(T)+\xi,
\end{aligned}
\right.
\end{equation}
where
$\chi^{\alpha}(t,e)=(x^{\alpha}(t),y^{\alpha}(t),z^{\alpha}(t),k^{\alpha}(t,e),(x^{\alpha}(t))',(y^{\alpha}(t))',(z^{\alpha}(t))',
(k^{\alpha}(t,e))')$, $\chi^{\alpha}(t-,e)=(x^{\alpha}(t-),y^{\alpha}(t-),\\ z^{\alpha}(t),k^{\alpha}(t,e),(x^{\alpha}(t-))',(y^{\alpha}(t-))',(z^{\alpha}(t))',
(k^{\alpha}(t,e))',e)$; $\phi,\ \psi$ and $\gamma$ are given processes in $\mathcal{H}_{\bar{\mathbb{F}}}^2(0,T)$ with values in $\mathbb{R}^n, \ \mathbb{R}^{n\times d}$ and $\mathbb{R}^m $, respectively; $\varphi\in\mathcal{K}^2_{\mathbb{F},\lambda}(0,T;\mathbb{R}^n)$ and $\xi\in L^2(\Omega, \mathcal{F}_T, P)$.

When $\alpha=0$,  from the existence and the uniqueness of the solutions of Mckean-Vlasov equation with jumps and mean-field BSDE with jumps we know equation (\ref{Eq3.2}) has a unique solution. Then from Lemma \ref{Lem3.1} in Appendix, there
exists a positive constant $\delta_0$ depending on Lipschitz constants, $\beta_1,\ \beta_2,\ \beta_3,\ \mu_1,\ \lambda_1$ and $T$, such that, for every $\delta\in[0,\delta_0]$, equation  (\ref{Eq3.2}) for $\alpha=\delta$ has a unique solution. We can repeat this process $N$ times where $1\leq N\delta_0\leq 1+\delta_0$. It means that, in particular, mean-field FBSDE (\ref{Eq3.2}) for $\alpha=1$ has a unique solution, i.e., (\ref{Eq3.1}) has a unique solution. \\
 The proof is complete.
\end{proof}
\begin{remark}
We note that the existence and the uniqueness of the solution of our equation (\ref{Eq3.1}) can also be obtained if  the monotonicity assumption (H3.2) in Theorem \ref{Thm3.1} is changed by the following form
\begin{eqnarray*}{\rm(H3.3)}
\begin{array}{ll}
\rm{(i)}& \int_E\langle A(t,\lambda,\widetilde{\lambda},e)-A(t,\bar{\lambda},\widetilde{\lambda},e),\lambda-\bar{\lambda}\rangle\lambda(de) \geq \beta_1|\widehat{x}|^2+\beta_2(|\widehat{y}|^2+|\widehat{z}|^2)+\beta_3\int_E|\widehat{k}(e)|^2\lambda(de);\\
 \rm{(ii)}&\langle\Phi(x,\widetilde{x})-\Phi(\bar{x},\widetilde{x}),G(x-\bar{x})\rangle \leq -\mu_1|\widehat{x}|^2;\\
  \end{array}
\end{eqnarray*}
where $\beta_1$, $\beta_2$, $\beta_3$ and $\mu_1$ are given nonnegative constants with $\beta_1-L_AC_0>0,\ \beta_2-L_AC_0\geq0$,\ $\beta_3-L_A\geq 0$, $\mu_1-L_\Phi\lambda_1>0$, or $\beta_1-L_AC_0=0,\ \beta_2-L_AC_0>0$,\ $\beta_3-L_A> 0$, $\mu_1-L_\Phi\lambda_1>0$, where $L_A,\,L_\Phi$,\  $C_0$ and $\lambda_1$ are the same as those in (H3.2).

The proof of this result is similar to that of Theorem \ref{Thm3.1} but one need to notice that the equation (\ref{Eq3.2}) should be changed into the following form
\begin{equation}\nonumber
\left\{
\begin{aligned}
dx^{\alpha}(t)=&\ \[\alpha\int_E E'[b(t,\chi^{\alpha}(t,e))]\lambda(de)+E'[\phi(t)]\]dt+\[\alpha \int_E E'[\sigma(t,\chi^{\alpha}(t,e))]\lambda(de)+E'[\psi(t)]\]dB_t\\
&\ +\int_E\[\alpha E'[h(t,\chi^{\alpha}(t,e),e)]+E'[\varphi(t,e)]\]\widetilde{\mu}(dt,de),\\
-dy^{\alpha}(t)=&\ \[-(1-\alpha)\beta_1Gx^\alpha(t)+\alpha\int_E E'[f(t,\chi^{\alpha}(t,e))]\lambda(de) +E'[\gamma(t)]\]dt-z^{\alpha}(t)dB_t-\int_E k^{\alpha}(t,e)\widetilde{\mu}(dt,de),\\
x^{\alpha}(0)=&\ a,\\
y^{\alpha}(T)=& \alpha E'[\Phi(x^{\alpha}(T),(x^{\alpha}(T))')]-(1-\alpha)Gx^{\alpha}(T)+\xi.
\end{aligned}
\right.
\end{equation}

\end{remark}

\begin{remark}\label{rem3.2}
(i)\ When $\Phi$ does not depends on $x$, $\widetilde{x}$, i.e., $\Phi(x,\widetilde{x})=\xi\in L^2(\Omega, \mathcal{F}_T, P)$ is given, for the existence and the uniqueness of the solution of mean-field FBSDE (\ref{Eq3.1}), the monotonicity assumption (H3.2) can be weakened as
\begin{equation}\nonumber
\int_E\langle A(t,\lambda,\widetilde{\lambda},e)-A(t,\bar{\lambda},\widetilde{\lambda},e),\lambda-\bar{\lambda}\rangle\lambda(de) \leq-\beta_1|\widehat{x}|^2-\beta_2|\widehat{y}|^2;
\end{equation}
 where $\beta_1$ and $\beta_2$  are given nonnegative constants with $\beta_1-C_0L_A\geq0,\ \beta_2-C_0L_A\geq0$ (the equalities can not be established at the same time),  $L_A$ is the Lipchitz constants of $A$ with respect to $\widetilde{\lambda}$.

(ii)\ When $\sigma$ does not depends on $z,\,z',\, k,\, k'$, the mean-field FBSDE (3.1) also has a unique adapted solution, but the monotonicity (H3.2) should be weakened as
\begin{eqnarray*}
\begin{array}{ll}
\rm{(i)}& \int_E\langle A(t,\lambda,\widetilde{\lambda},e)-A(t,\bar{\lambda},\widetilde{\lambda},e),\lambda-\bar{\lambda}\rangle\lambda(de) \leq -\beta_1|\widehat{x}|^2;\\
 \rm{(ii)}&\langle\Phi(x,\widetilde{x})-\Phi(\bar{x},\widetilde{x}),G(x-\bar{x})\rangle\geq \mu_1|\widehat{x}|^2,
  \end{array}
\end{eqnarray*}
where $\beta_1$ and $\mu$ are given nonnegative constants with $\beta_1>L_A+2L_AC_{L_g,T}C_0^2$, $\mu_1>L_\Phi\lambda_1+8C_{L_g,T}L_\Phi^2L_AC_0$ ( $C_{L_g,T}:=\exp{\{[C_0(4L_f+12L_f^2+8L_f^2C_0)+1]T}\}$).
\end{remark}

\begin{example}
We consider
\begin{equation}\nonumber
\left\{
\begin{aligned}
dx(t)=&\  E'[-y'(t)-2y(t)]dt+E'[-z'(t)-2z(t)]dB_t+\int_EE'[-k'(t,e)-2k(t,e)]\widetilde{\mu}(dt,de),\,t\in[0,T],\\
-dy(t)=&\ E'[x'(t)+2x(t)]dt-z(t)dB_t-\int_Ek(t,e)\widetilde{\mu}(dt,de),\,t\in[0,T],\\
x(0)=&\ 1,\\
y(T)=& E'[x'(T)+2x(T)].
\end{aligned}
\right.
\end{equation}
We can take $\beta_1=\beta_2=\beta_3=2$, $\mu_1=2$, $C_0=1$, $L_A=1$, $L_\Phi=1$, from Theorem {\rm{3.1}}, we know it has a unique solution.
\end{example}
We now give an example to explain that the assumption (H3.2)  is necessary for Theorem \ref{Thm3.1}, i.e., if the coefficients of our equation do not satisfy (H3.2),
the solution of equation (\ref{Eq3.1}) may not exist.
\begin{example}
We take $m=n=d=1$ here. We consider
\begin{equation}\label{Eq E.1}
\left\{
\begin{aligned}
dx(t)=&\ E[y(t)]dt +dB_t+\int_Ek(t,e)\widetilde{\mu}(dt,de),\ t\in [0,\frac{3}{4}\pi],\\
-dy(t)=&\ E[x(t)]dt-z(t)dB_t-\int_Ek(t,e)\widetilde{\mu}(dt,de),\ t\in [0,\frac{3}{4}\pi],\\
x(0)=&\ 1,\ y(\frac{3}{4}\pi)=-E[x(\frac{3}{4}\pi)],\ t\in [0, \frac{3}{4}\pi].
\end{aligned}
\right.
\end{equation}
It's easy to check this equation does not satisfy (H3.2), we point out that it also does not exist an adapted solution. In fact, if $(x,y,z,k)_{0\leq t\leq \frac{3}{4}\pi}$ is the solution of mean-field FBSDE (\ref{Eq E.1}), then $(E[x(t)], E[y(t)])$ is the solution of the following ordinary differential equation (ODE, for short):\\
\begin{equation}\label{Eq E.2}
\left\{
\begin{aligned}
\dot{X}=&\ Y,\  \dot{Y}= -X,\\
 X(0)=&\ 1,\ Y(\frac{3}{4}\pi)=-X(\frac{3}{4}\pi), t\in [0, \frac{3}{4}\pi].
\end{aligned}
\right.
\end{equation}
But we know this ODE has no solution, therefore there is no adapted solution of (\ref{Eq E.1}).
\end{example}

\section{Continuity property on the parameters}
 \qquad $\,\,\,\,$In this section we will discuss the continuity of the solution of equation (\ref{Eq3.1}) depending on parameters. We consider the following mean-field FBSDEs with coefficients $(b_{\alpha}, \sigma_{\alpha},h_{\alpha}, f_{\alpha}, \Phi_{\alpha}),\ \alpha\in \mathbb{R}$:
\begin{equation}\label{Eq4.1}
\left\{
\begin{aligned}
dx^{\alpha}(t)=&\ \int_EE'[b_\alpha(t,\chi^{\alpha}(t,e))]\lambda(de)dt+\int_EE'[\sigma_\alpha(t,\chi^{\alpha}(t,e))]\lambda(de)dB_t
+\int_EE'[h_\alpha(t,\chi^{\alpha}(t-,e))]\widetilde{\mu}(dt,de),\\
-dy^{\alpha}(t)=&\ \int_EE'[f_\alpha(t,\chi^{\alpha}(t,e))]\lambda(de)dt-z^{\alpha}(t)dB_t-\int_Ek^{\alpha}(t,e)\widetilde{\mu}(dt,de),\\
x^{\alpha}(0)=&\ a,\\
y^{\alpha}(T)=&\ E'[\Phi_\alpha(x^{\alpha}(T),(x^{\alpha}(T))')],
\end{aligned}
\right.
\end{equation}
where $$\chi^{\alpha}(t,e)=(x^{\alpha}(t),y^{\alpha}(t),z^{\alpha}(t),k^{\alpha}(t,e),(x^{\alpha}(t))',
(y^{\alpha}(t))',(z^{\alpha}(t))',(k^{\alpha}(t,e))'),$$
$$\chi^{\alpha}(t-,e)=(x^{\alpha}(t-),y^{\alpha}(t-), z^{\alpha}(t),k^{\alpha}(t,e),(x^{\alpha}(t-))',
(y^{\alpha}(t-))',(z^{\alpha}(t))',(k^{\alpha}(t,e))',e),$$
and the mappings $b_{\alpha}, \sigma_{\alpha},h_{\alpha}, f_{\alpha}, \Phi_{\alpha}, A_\alpha=(-G^T f_\alpha, Gb_\alpha, G\sigma_\alpha, G h_\alpha)^T$
 satisfy (H3.1) and (H3.2), for each $\alpha \in \mathbb{R}$. Then, from Theorem 3.1 we know mean-field FBSDE (\ref{Eq4.1}) has a unique solution $(x^{\alpha}, y^{\alpha}, z^{\alpha},k^{\alpha})$\ for each $\alpha\in \mathbb{R}.$\\
Let us give some more assumptions.
\begin{eqnarray*}{\rm(H4.1)}
\begin{array}{lll}
\rm{(i)}& \mbox{The coefficients } (b_{\alpha}, \sigma_{\alpha},h_{\alpha}, f_{\alpha}, \Phi_{\alpha}), \alpha\in \mathbb{R},\ \mbox{are
 uniformly Lipschitz in } (x, y, z,k, \widetilde{x}, \widetilde{y}, \widetilde{z},\widetilde{k});\\
 \rm{(ii)}& \mbox{The mappings}\ \alpha\mapsto (b_{\alpha}, \sigma_{\alpha},h_{\alpha}, f_{\alpha}, \Phi_{\alpha}),\ \alpha\in \mathbb{R},\ \mbox{are continuous respectively}.
\end{array}
\end{eqnarray*}
Then we have the following continuity property.

\begin{theorem}\label{Thm4.1}
Let the coefficients $(b_{\alpha}, \sigma_{\alpha}, h_{\alpha},f_{\alpha}, \Phi_{\alpha}), \alpha\in \mathbb{R}$, satisfy (H3.1), (H3.2) and (H4.1), and the associated solution of mean-field FBSDE with jumps (\ref{Eq4.1}) is denoted by $(x^\alpha, y^\alpha, z^\alpha,k^\alpha)$. Then, the mappings
 $$\alpha\mapsto(x^\alpha, y^\alpha, z^\alpha,k^\alpha, x^{\alpha}(T)): \mathbb{R}\mapsto \mathcal{H}_{\mathbb{F}}^2(0,T;\mathbb{R}^n\times\mathbb{R}^m\times\mathbb{R}^{m\times d})\times\mathcal{K}^2_{\mathbb{F},\lambda}(0,T;\mathbb{R}^m)
 \times L^2(\Omega,\mathcal{F}_T,P;\mathbb{R}^n)$$
 is continuous.
\end{theorem}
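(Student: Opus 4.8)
The plan is to fix $\alpha\in\mathbb{R}$, take an arbitrary sequence $\alpha_j\to\alpha$, and prove that the solutions $(x^{\alpha_j},y^{\alpha_j},z^{\alpha_j},k^{\alpha_j})$ converge to $(x^{\alpha},y^{\alpha},z^{\alpha},k^{\alpha})$ in the stated norms; sequential continuity then gives continuity of the map. Writing $\widehat{x}=x^{\alpha_j}-x^{\alpha}$, and likewise $\widehat{y},\widehat{z},\widehat{k}$ and their independent copies, we have $\widehat{x}(0)=0$. The engine of the proof is the same duality identity used in the uniqueness part of Theorem \ref{Thm3.1}: apply It\^o's formula to $\langle G\widehat{x}(s),\widehat{y}(s)\rangle$ on $[0,T]$ and take expectation, so that the terminal cross term $E\langle G\widehat{x}(T),\widehat{y}(T)\rangle$ equals the expected integral of $\int_E\langle A_{\alpha_j}(t,\lambda^{\alpha_j},(\lambda^{\alpha_j})',e)-A_{\alpha}(t,\lambda^{\alpha},(\lambda^{\alpha})',e),\,\lambda^{\alpha_j}-\lambda^{\alpha}\rangle\,\lambda(de)$.

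First I would decompose the coefficient increment around the reference solution into three pieces: $A_{\alpha_j}(t,\lambda^{\alpha_j},(\lambda^{\alpha_j})',e)-A_{\alpha_j}(t,\lambda^{\alpha},(\lambda^{\alpha_j})',e)$ (variation in the first argument), $A_{\alpha_j}(t,\lambda^{\alpha},(\lambda^{\alpha_j})',e)-A_{\alpha_j}(t,\lambda^{\alpha},(\lambda^{\alpha})',e)$ (variation in the mean-field copy), and the parameter-error piece $\rho_j(t,e):=A_{\alpha_j}(t,\lambda^{\alpha},(\lambda^{\alpha})',e)-A_{\alpha}(t,\lambda^{\alpha},(\lambda^{\alpha})',e)$ frozen at the reference solution. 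The first piece is handled by the monotonicity (H3.2)(i), contributing $-\beta_1|\widehat{x}|^2-\beta_2(|\widehat{y}|^2+|\widehat{z}|^2)-\beta_3\int_E|\widehat{k}|^2\lambda(de)$; the copy piece is bounded by the $L_A$-Lipschitz property in $\widetilde{\lambda}$, and after Young's inequality together with the exchangeability of $\bar{P}$ (so that $\bar{E}|(\widehat{x})'|^2=E|\widehat{x}|^2$, etc.) it contributes precisely the $L_AC_0$ and $L_A$ terms that appear in the uniqueness computation. Treating the terminal condition in the same way through (H3.2)(ii) — splitting off the $\Phi$-increment $E'[(\Phi_{\alpha_j}-\Phi_{\alpha})(x^{\alpha}(T),(x^{\alpha}(T))')]$ at the reference solution — I arrive at the inhomogeneous analogue of the master inequality from Theorem \ref{Thm3.1}, namely $(\mu_1-L_\Phi\lambda_1)E|\widehat{x}(T)|^2+(\beta_1-L_AC_0)\norm{\widehat{x}}^2+(\beta_2-L_AC_0)(\norm{\widehat{y}}^2+\norm{\widehat{z}}^2)+(\beta_3-L_A)\norm{\widehat{k}}^2\leq \mathcal{E}_j$, where $\mathcal{E}_j$ collects the cross products of the solution differences with $\rho_j$ and with the $\Phi$-increment.

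Under the sign conditions of (H3.2) the left-hand side is coercive, so the next step is to extract all five quantities of the statement. In case $(1)$ the inequality directly controls $E|\widehat{x}(T)|^2$ and $\norm{\widehat{x}}^2$; if $\beta_2-L_AC_0$ or $\beta_3-L_A$ fails to be strict, I feed the now-controlled terminal value $\widehat{x}(T)$ and forward data into the a priori estimate for the mean-field BSDE with jumps (Lemma \ref{Thm2.1}) to recover $\norm{\widehat{y}}$, $\norm{\widehat{z}}$ and $\norm{\widehat{k}}$. In case $(2)$ the coefficients $\beta_2-L_AC_0$ and $\beta_3-L_A$ are themselves strictly positive and yield $\widehat{y},\widehat{z},\widehat{k}$ at once, after which the forward SDE of (\ref{Eq4.1}) and Gronwall's inequality produce $\norm{\widehat{x}}^2$ and $E|\widehat{x}(T)|^2$. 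Either way, all five quantities are bounded by a constant times $\mathcal{E}_j$.

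Finally I would show $\mathcal{E}_j\to0$. Young's inequality lets me absorb the solution-difference factors in $\mathcal{E}_j$ into the strictly positive coercive terms on the left, leaving a remainder controlled by the pure error norms $E\int_0^T\int_E|\rho_j(t,e)|^2\lambda(de)dt$ and $E\,|E'[(\Phi_{\alpha_j}-\Phi_{\alpha})(x^{\alpha}(T),(x^{\alpha}(T))')]|^2$. Since these are evaluated at the fixed reference solution, which lies in the required $L^2$ spaces, (H4.1)(ii) gives pointwise convergence to $0$ while (H4.1)(i) and (H3.1) supply an $L^2$-integrable dominating function; dominated convergence then closes the argument. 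I expect the main obstacle to be the careful bookkeeping of the mean-field copy terms — making sure the $\widetilde{\lambda}$-Lipschitz contributions recombine under the exchangeability of $\bar{P}$ into exactly the $L_AC_0$ and $L_A$ coefficients that the sign conditions of (H3.2) are built to dominate — together with verifying the $L^2$-domination that legitimizes passing the frozen parameter error to the limit.
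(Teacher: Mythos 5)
Your proposal is correct and follows essentially the same route as the paper's proof: It\^o's formula applied to $\langle G\widehat{x},\widehat{y}\rangle$, the decomposition of the coefficient increment into a monotonicity piece, a copy-Lipschitz piece, and a parameter error frozen at the reference solution, followed by Young absorption with a small $\delta$ and the standard forward/backward a priori estimates to recover the components not controlled directly by the coercive inequality. The only differences are cosmetic — you argue via sequences $\alpha_j\to\alpha$ and spell out both sign cases and the dominated-convergence step, whereas the paper works at $\alpha=0$, treats case (2) as ``similar,'' and leaves the final passage to the limit implicit.
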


\begin{proof} For simplicity of notations, we only prove the continuity of the solutions $(x^\alpha, y^\alpha, z^\alpha,k^\alpha, x^{\alpha}(T))$\ of mean-field FBSDE (\ref{Eq4.1}) at $\alpha=0$. We want to prove that
$(x^\alpha, y^\alpha, z^\alpha,k^\alpha, x^{\alpha}(T))$ converges to
$(x^0, y^0, z^0,k^0, x^0(T))$ in $\mathcal{H}_{\mathbb{F}}^2(0,T;\mathbb{R}^n\times\mathbb{R}^m\times\mathbb{R}^{m\times d})\times\mathcal{K}^2_{\mathbb{F},\lambda}(0,T;\mathbb{R}^m)
\times L^2(\Omega,\mathcal{F}_T,P;\mathbb{R}^n)$
as $\alpha$ tends to 0. We set $\lambda^\alpha(t,e)=(x^\alpha(t), y^\alpha(t), z^\alpha(t),k^\alpha(t,e))$, and  $\widehat{\lambda}(t,e)=\lambda^\alpha(t,e)-\lambda^0(t,e)=(\widehat{x}(t),\widehat{y}(t),\widehat{z}(t),\widehat{k}(t,e))
=(x^\alpha(t)-x^0(t), y^\alpha(t)-y^0(t), z^\alpha(t)-z^0(t),k^\alpha(t,e))-k^0(t,e))$, then from (\ref{Eq4.1}) we know
\begin{equation}
\left\{
\begin{aligned}
d\widehat{x}(t)=&\ \int_EE'\[b_\alpha(t,\lambda^{\alpha}(t,e),(\lambda^{\alpha}(t,e))')-b_0(t,\lambda^0(t,e),(\lambda^0(t,e))')\]\lambda(de)dt\\
&+\ \int_EE'\[\sigma_\alpha(t,\lambda^{\alpha}(t,e),(\lambda^{\alpha}(t,e))')-\sigma_0(t,\lambda^0(t,e),(\lambda^0(t,e))')\]
\lambda(de)dB_t\\
&+\ \int_EE'\[h_\alpha(t,\lambda^{\alpha}(t,e),(\lambda^{\alpha}(t,e))',e)-h_0(t,\lambda^0(t,e),(\lambda^0(t,e))',e)\]\widetilde{\mu}(dt,de),\\
-d\widehat{y}(t)=&\ \int_EE'\[f_\alpha(t,\lambda^{\alpha}(t,e),(\lambda^{\alpha}(t,e))')-f_0(t,\lambda^0(t,e),(\lambda^0(t,e))')\]\lambda(de)dt\\
&-\widehat{z}(t)dB_t-\int_E\widehat{k}(t,e)\widetilde{\mu}(dt,de),\\
\widehat{x}(0)=&\ 0,\\
\widehat{y}(T)=&E'\[\Phi_\alpha(x^{\alpha}(T),(x^{\alpha}(T))')-\Phi_0(x^0(T),(x^0(T))')\].
\end{aligned}
\right.
\end{equation}
From assumptions (H3.1), (H3.2) and (H4.1), and standard estimates of $\widehat{x}(t)$ and
$(\widehat{y}(t), \widehat{z}(t),\widehat{k}(t))$, we get
  \begin{equation}\label{equ 2018103103}
\sup_{0\leq t\leq T}E|\widehat{x}(t)|^2
\leq C_1 E\int_0^T\(|\widehat{y}(t)|^2+|\widehat{z}(t)|^2+\int_E|\widehat{k}(t,e)|^2\lambda(de)\)dt
+C_1\bar{E}\int_0^T\int_E\[|\widehat{b}(t,e)|^2+|\widehat{\sigma}(t,e)|^2\]\lambda(de)dt;
\end{equation}
\begin{equation}\label{equ 2018103102}
\begin{aligned}
&\ E\int_0^T\(|\widehat{y}(t)|^2+|\widehat{z}(t)|^2+\int_E|\widehat{k}(t,e)|^2\lambda(de)\)dt\\
\leq&\ C_1\Big\{E\int_0^T|\widehat{x}(t)|^2dt+E|\widehat{x}(T)|^2+\bar{E}\int_0^T\int_E|\widehat{f}(t,e)|^2\lambda(de)dt
+\bar{E}[|\widehat{\Phi}(T)|^2]\Big\},
\end{aligned}
\end{equation}
here $C_1$ depends on the Lipchitz constants of $(b_\alpha,\sigma_\alpha,h_\alpha,f_\alpha)$, constant $C_0$ and $T$, where
\begin{equation}\nonumber
\begin{aligned}
\widehat{b}(t,e)=&\ b_{\alpha}(t,\lambda^0(t,e),(\lambda^0(t,e))')-b_0(t,\lambda^0(t,e),(\lambda^0(t,e))'),\\
\widehat{\sigma}(t,e)=&\ \sigma_{\alpha}(t,\lambda^0(t,e),(\lambda^0(t,e))')-\sigma_0(t,\lambda^0(t,e),(\lambda^0(t,e))'),\\
\widehat{h}(t,e)=&\ h_{\alpha}(t,\lambda^0(t,e),(\lambda^0(t,e))',e)-h_0(t,\lambda^0(t,e),(\lambda^0(t,e))',e),\\
\widehat{f}(t,e)=&\ -f_{\alpha}(t,\lambda^0(t,e),(\lambda^0(t,e))')+f_0(t,\lambda^0(t,e),(\lambda^0(t,e))'),\\
\widehat{\Phi}(T)=&\ \Phi_\alpha(x^0(T),(x^0(T))')-\Phi_0(x^0(T),(x^0(T))').
\end{aligned}
\end{equation}
Applying It\^o's formula to $\langle G\widehat{x}(t),\widehat{y}(t)\rangle$ it yields
\begin{equation}\nonumber
\begin{aligned}
&\ E\big\langle E'[\Phi_\alpha(x^\alpha(T),(x^\alpha(T))')-\Phi_\alpha(x^0(T),(x^0(T))')],G\widehat{x}(T)\big\rangle\\
&\ +E\big\langle E'[\Phi_\alpha(x^0(T),(x^0(T))')-\Phi_0(x^0(T),(x^0(T))')],G\widehat{x}(T)\big\rangle\\
=&\ E\int_0^T\int_EE'\big\langle A_\alpha(t,\lambda^\alpha(t,e),(\lambda^\alpha(t,e))',e)-A_\alpha(t,\lambda^0(t,e),(\lambda^0(t,e))',e),\widehat{\lambda}(t,e)
\big\rangle \lambda(de)dt\\
&\ +E\int_0^T\int_EE'\[\langle G\widehat{x}(t),\widehat{f}(t,e)\rangle+\langle G^T\widehat{y}(t),\widehat{b}(t,e)\rangle+\langle G^T\widehat{z}(t),\widehat{\sigma}(t,e)\rangle
+\langle G^T\widehat{k}(t,e),\widehat{h}(t,e)\rangle\]\lambda(de)dt.
\end{aligned}
\end{equation}
With the help of (H3.2) and the Lipschitz properties of $A_\alpha$ and $\Phi_\alpha$, we have
\begin{equation}
 \begin{aligned}
&\ (\mu_1-L_{\Phi_\alpha}\lambda_1)E|\widehat{x}(T)|^2+(\beta_1-C_0L_{A_\alpha})E\int_0^T|\widehat{x}(t)|^2dt+
(\beta_2-C_0L_{A_\alpha})E\int_0^T(|\widehat{y}(t)|^2+|\widehat{z}(t)|^2)dt\\
&\ +(\beta_3-L_{A_\alpha})E\int_0^T\int_E|\widehat{k}(t,e)|^2
\lambda(de)dt\\
\leq &\ C_2E\[E'|\widehat{\Phi}(T)|^2+\int_0^T\int_EE'\(|\widehat{b}(t,e)|^2+|\widehat{f}(t,e)|^2
+|\widehat{\sigma}(t,e)|^2\)
\lambda(de)dt\]\\
&\ +\delta\[E|\widehat{x}(T)|^2+E\int_0^T\(|\widehat{x}(t)|^2+|\widehat{y}(t)|^2+|\widehat{z}(t)|^2
+\int_E|\widehat{k}(t,e)|^2\lambda(de)\)dt\],
\end{aligned}
\end{equation}
for any $\delta>0$.
Since $\beta_1-C_0L_{A_\alpha}>0$, $\beta_2-C_0L_{A_\alpha}\geq0$, $\beta_3-L_{A_\alpha}\geq0$, $\mu_1-L_{\Phi_\alpha}\lambda_1>0$ \big(the situation of  $\beta_1-C_0L_{A_\alpha}=0$, $\beta_2-C_0L_{A_\alpha}>0$, $\beta_3-L_{A_\alpha}>0$, $\mu_1-L_{\Phi_\alpha}\lambda_1>0$ can be similar discussed\big), from (4.5) we have
\begin{equation}\label{equ 2018103101}
\begin{aligned}
&\ (\mu_1-L_{\Phi_\alpha}\lambda_1)E|\widehat{x}(T)|^2+(\beta_1-C_0L_{A_\alpha})E\int_0^T|\widehat{x}(t)|^2dt\\
\leq &\ C_2E\[E'|\widehat{\Phi}(T)|^2+\int_0^T\int_EE'\(|\widehat{b}(t,e)|^2+|\widehat{\sigma}(t,e)|^2
+|\widehat{h}(t,e)|^2+|\widehat{f}(t,e)|^2\)
\lambda(de)dt\]\\
&\ +\delta\[E|\widehat{x}(T)|^2+E\int_0^T\(|\widehat{x}(t)|^2+|\widehat{y}(t)|^2+|\widehat{z}(t)|^2
+\int_E|\widehat{k}(t,e)|^2\lambda(de)\)dt\].
\end{aligned}\end{equation}
Using (\ref{equ 2018103102}) and (\ref{equ 2018103101}) we can take sufficiently small $\delta$ such that
\begin{equation}
\begin{aligned}
&\ E|\widehat{x}(T)|^2+E\int_0^T\(|\widehat{x}(t)|^2+|\widehat{y}(t)|^2+|\widehat{z}|^2
+\int_E|\widehat{k}(t,e)|^2\lambda(de)\)dt\\
\leq&\  \ C\bar{E}\[|\widehat{\Phi}(T)|^2 +\int_0^T\int_E\(|\widehat{b}(t,e)|^2+|\widehat{\sigma}(t,e)|^2+|\widehat{h}(t,e)|^2+|\widehat{f}(t,e)|^2\)dt\],
\end{aligned}
\end{equation}
here the constant $C$ only depends on $C_1,\,C_2,\,\beta_1,\,\mu_1,\ L_{A_\alpha},\ L_{\Phi_\alpha}$.\\
Hence, we have that $(x^\alpha, y^\alpha, z^\alpha,k^\alpha, x^{\alpha}(T))$ converges to $(x^0, y^0, z^0,k^0, x^0(T))$ in $\mathcal{H}_{\mathbb{F}}^2(0,T;\mathbb{R}^n\times\mathbb{R}^m\times\mathbb{R}^{m\times d})
\times\mathcal{K}^2_{\mathbb{F},\lambda}(0,T;\mathbb{R}^m)
\times L^2(\Omega,\mathcal{F}_T,P;\mathbb{R}^n)$
as $\alpha$ tends to 0.
\end{proof}

\section{Maximum principle for the controlled fully coupled mean-field FBSDEs with jumps}
We consider the following controlled fully coupled mean-field forward-backward SDEs with jumps:
\begin{equation}\label{Eq6.1}
\left\{
\begin{aligned}
dx^v(t)=&\ \int_EE'[b\big(t,\pi^v(t,e),v(t)\big)]\lambda(de)dt +\int_EE'[\sigma\big(t,\pi^v(t,e),v(t)\big)]\lambda(de)dB_t\\
&\ +\int_EE'[h\big(t,\pi^v(t-,e),v(t),e\big)]\widetilde{\mu}(dtde),\\
-dy^v(t)=&\ \int_EE'[f\big(t,\pi^v(t,e),v(t)\big)]\lambda(de)dt-z^v(t)dB_t-\int_Ek^v(t,e)\widetilde{\mu}(dtde),\\
x^v(0)=&\ a,\ \ y^v(T)=E'[\Phi\big(x^v(T),(x^v(T))'\big)],
\end{aligned}
\right.
\end{equation}
where $$\pi^v(t,e)=\big(x^v(t), y^v(t), z^v(t), k^v(t,e), (x^v(t))',(y^v(t))',(z^v(t))', (k^v(t,e))'\big),$$
$$\pi^v(t-,e)=\big(x^v(t-), y^v(t-), z^v(t), k^v(t,e), (x^v(t-))',(y^v(t-))', (z^v(t))', (k^v(t,e))'\big).$$ Let $U$\ be a nonempty convex subset of $\mathbb{R}^k$, we define the admissible control set
$$\mathcal{U}_{ad}=\{v(\cdot)\in \mathcal{H}_{\bar{\mathbb{F}}}^2(0, T; \mathbb{R}^k)|v(t)\in U,\ 0\leq t\leq T,\ \bar{P}\mbox{-a.s.}\}.$$
We now define the following cost functional:
\begin{equation}\label{equ 2018012301}
\begin{aligned}
 J(v(\cdot))=E\Big[ & \displaystyle \int_0^T\int_EE'\big[g\big(t,\pi^v(t,e),v(t)\big)\big]\lambda(de)dt+E'[\varphi\big(x^v(T),(x^v(T))'\big)]+\gamma\big(y^v(0)\big)\Big],
\end{aligned}
\end{equation}
where
$$\begin{array}{lll}
& &b:[0,T]\times \mathbb{R}^n \times \mathbb{R}^m \times \mathbb{R}^{m\times d}\times \mathbb{R}^m \times \mathbb{R}^n \times \mathbb{R}^m \times \mathbb{R}^{m\times d}\times \mathbb{R}^m \times U\rightarrow \mathbb{R}^n,\\
& &\sigma:[0,T]\times \mathbb{R}^n \times \mathbb{R}^m \times \mathbb{R}^{m\times d}\times \mathbb{R}^m \times \mathbb{R}^n \times \mathbb{R}^m \times \mathbb{R}^{m\times d}\times \mathbb{R}^m \times U\rightarrow \mathbb{R}^{n\times d},\\
& &h:[0,T]\times \mathbb{R}^n \times \mathbb{R}^m \times \mathbb{R}^{m\times d}\times \mathbb{R}^m \times \mathbb{R}^n \times \mathbb{R}^m \times \mathbb{R}^{m\times d}\times \mathbb{R}^m \times U\times E\rightarrow \mathbb{R}^{n},\\
& &f:[0,T]\times \mathbb{R}^n \times \mathbb{R}^m \times \mathbb{R}^{m\times d}\times \mathbb{R}^m\times \mathbb{R}^n \times \mathbb{R}^m \times \mathbb{R}^{m\times d}\times \mathbb{R}^m\times U\rightarrow \mathbb{R}^{m},\\
& &g:[0,T]\times \mathbb{R}^n \times \mathbb{R}^m \times \mathbb{R}^{m\times d}\times \mathbb{R}^m\times \mathbb{R}^n \times \mathbb{R}^m \times \mathbb{R}^{m\times d}\times \mathbb{R}^m\times U\rightarrow \mathbb{R},\\
& &\Phi:\mathbb{R}^n\times\mathbb{R}^n\rightarrow \mathbb{R}^m \ , \ \varphi:\mathbb{R}^n\times\mathbb{R}^n\rightarrow \mathbb{R} \ , \ \gamma:\mathbb{R}^m\rightarrow \mathbb{R}.\\
\end{array}
$$
Our stochastic optimal control problem is to minimize the cost functional $J(v(\cdot))$ over all admissible controls. An admissible control $u(\cdot)$ is called an optimal control if the cost functional $J(v(\cdot))$ attains the minimum at $u(\cdot)$.
Equation (\ref{Eq6.1}) is called the state equation, the solution $(x(\cdot),y(\cdot),z(\cdot),k(\cdot,\cdot))$ corresponding to $u(\cdot)$ is called the optimal trajectory.\\
We assume
\begin{eqnarray*}{\rm(H5.1)}
\left\{ \begin{array}{lllll}
\rm{(i)}& b,\  \sigma,\ h,\ f,\ g,\ \Phi,\ \varphi\ \mbox{and}\ \gamma\ \mbox{are continuously differentiable to } (x,\ y,\ z,\ k,\ \widetilde{x},\ \widetilde{y},\ \widetilde{z},\ \widetilde{k},\ v);\\
\rm{(ii)}& \mbox{The derivatives of } b,\ \sigma,\ h,\ f,\ \Phi\ \mbox{are bounded};\\
\rm{(iii)}& \mbox{The derivatives of $g$ are bounded by } C(1+|x|+|y|+|z|+|k|+|\widetilde{x}|+|\widetilde{y}|+|\widetilde{z}|+|\widetilde{k}|+|v|) ;\\
\rm{(iv)}& \mbox{The derivatives of}\  \varphi\ \mbox{and}\ \gamma\ \mbox{are bounded by } C(1+|x|+|\widetilde{x}|)\ \mbox{and}\ C(1+|y|),\ \mbox{respectively} ;\\
 \rm{(v)}& \mbox{For any given admissible control}\ v(\cdot),\ \mbox{the coefficients satisfy (H3.1) and (H3.2)}.
\end{array}\right.
\end{eqnarray*}

Let $u(\cdot)$ be an optimal control and $(x(\cdot), y(\cdot), z(\cdot),k(\cdot,\cdot))$ be the corresponding optimal trajectory.
Let $v(\cdot)$ be such that $u(\cdot)+v(\cdot)\in\mathcal{U}_{ad}$. Since $U$ is convex, we may choose the perturbation
  $$u_\rho(\cdot)=u(\cdot)+\rho v(\cdot)\in\mathcal{U}_{ad},$$
  for any $0\leq\rho\leq1$.

To simplify the form of the following variational equation (\ref{Eq6.2}), variational inequality (\ref{Eq6.4}) and adjoint equation (\ref{Eq6.5}), we introduce the following notations:
\begin{eqnarray*}
\begin{split}
&\theta(t,e)=\big(t, x(t), y(t), z(t), k(t,e), (x(t))', (y(t))', (z(t))', (k(t,e))', u(t)\big),\\
&\theta(t-,e)=\big(t, x(t-), y(t-), z(t), k(t,e), (x(t-))', (y(t-))', (z(t))', (k(t,e))', u(t), e\big),\\
&\rho(t,e)=\big(t, (x(t))', (y(t))', (z(t))', (k(t,e))', x(t), y(t), z(t), k(t,e), (u(t))'\big),\\
&\rho(t-,e)=\big(t, (x(t-))', (y(t-))', (z(t))', (k(t,e))', x(t-), y(t-), z(t), k(t,e), (u(t))', e\big).
\end{split}
\end{eqnarray*}

We denote by $(x_\rho(\cdot),\ y_\rho(\cdot),\ z_\rho(\cdot),k_\rho(\cdot,\cdot))$\ the trajectory corresponding to $u_\rho$. Then we have the following convergence result.
\begin{lemma}\label{lem6.1}
Under the assumption (H5.1), it holds
$$\lim_{\rho\rightarrow0}\frac{x_\rho(t)-x(t)}{\rho}=x^1(t), \ \ \lim_{\rho\rightarrow0}\frac{y_\rho(t)-y(t)}{\rho}=y^1(t),\  \lim_{\rho\rightarrow0}\frac{z_\rho(t)-z(t)}{\rho}=z^1(t),\ \text{in}\ \mathcal{H}_{\mathbb{F}}^2(0,T).$$
$$\lim_{\rho\rightarrow0}\frac{k_\rho(t,e)-k(t,e)}{\rho}=k^1(t,e),\  \text{in}\ \mathcal{K}_{\mathbb{F}\lambda}^2(0,T),$$
where $(x^1(\cdot), y^1(\cdot), z^1(\cdot), k^1(\cdot,\cdot))$ is the unique solution of  the following variational equation:
\begin{equation}\label{Eq6.2}
\left\{
\begin{aligned}
dx^1(t)=&\int_E E'\big\{b_x(\theta(t,e))x^1(t)+b_y(\theta(t,e))y^1(t)+b_z(\theta(t,e))z^1(t)+b_k(\theta(t,e))k^1(t,e)+b_v(\theta(t,e))v(t)\\
&+b_{\widetilde{x}}(\theta(t,e))(x^1(t))'
+b_{\widetilde{y}}(\theta(t,e))(y^1(t))'+b_{\widetilde{z}}(\theta(t,e))(z^1(t))'+b_{\widetilde{k}}(\theta(t,e))(k^1(t,e))'\big\}\lambda(de)dt\\
&+\int_E E'\big\{\sigma_x(\theta(t,e))x^1(t)+\sigma_y(\theta(t,e))y^1(t)+\sigma_z(\theta(t,e))z^1(t)+\sigma_k(\theta(t,e))k^1(t,e)+\sigma_v(\theta(t,e))v(t)\\
&+\sigma_{\widetilde{x}}(\theta(t,e))(x^1(t))'
+\sigma_{\widetilde{y}}(\theta(t,e))(y^1(t))'+\sigma_{\widetilde{z}}(\theta(t,e))(z^1(t))'+\sigma_{\widetilde{k}}(\theta(t,e))(k^1(t,e))'\big\}\lambda(de)dB_t\\
&+\int_E E'\big\{h_x(\theta(t-,e))x^1(t)+h_y(\theta(t-,e))y^1(t)+h_z(\theta(t-,e))z^1(t)+h_k(\theta(t-,e))k^1(t,e)\\
&+h_v(\theta(t-,e))v(t)+h_{\widetilde{x}}(\theta(t-,e))(x^1(t))'+h_{\widetilde{y}}(\theta(t-,e))(y^1(t))'+h_{\widetilde{z}}(\theta(t-,e))(z^1(t))'\\
&+h_{\widetilde{k}}(\theta(t-,e))(k^1(t,e))'\big\}\widetilde{\mu}(dtde),\\
-dy^1(t)=&\ \int_E E'\big\{f_x(\theta(t,e))x^1(t)+f_y(\theta(t,e))y^1(t)+f_z(\theta(t,e))z^1(t)+f_k(\theta(t,e))k^1(t,e)+f_v(\theta(t,e))v(t)\\
&+f_{\widetilde{x}}(\theta(t,e))(x^1(t))'+f_{\widetilde{y}}(\theta(t,e))(y^1(t))'+f_{\widetilde{z}}(\theta(t,e))(z^1(t))'+f_{\widetilde{k}}(\theta(t,e))(k^1(t,e))'\big\}\lambda(de)dt\\
&
-z^1(t)dB_t-\int_Ek^1(t,e)\widetilde{\mu}(dtde),\\
x^1(0)=&0,\ y^1(T)=E'[\Phi_x\big(x(T), (x(T))'\big)x^1(T)+\Phi_{\widetilde{x}}\big(x(T),(x(T))'\big)(x^1(T))'].
\end{aligned}
\right.
\end{equation}
\end{lemma}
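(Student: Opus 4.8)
The plan is to proceed in three stages: show that the variational equation (\ref{Eq6.2}) is well posed, derive a uniform a priori estimate for the scaled increments of the trajectory, and then identify their limit with the solution of (\ref{Eq6.2}).

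First I would check that (\ref{Eq6.2}) is well posed. It is a \emph{linear} fully coupled mean-field FBSDE with jumps whose coefficients are the partial derivatives of $b,\sigma,h,f$ and $\Phi$ evaluated along the optimal trajectory at $\theta(t,e)$, $\theta(t-,e)$. By (H5.1)(ii) these derivatives are bounded, so the associated operator $A^1$ formed as in Section 3 is uniformly Lipschitz; moreover, by differentiating the monotonicity inequality (H3.2)(i) in the direction $\lambda-\bar\lambda$ one sees that the linear map $A^1$ inherits (H3.2) with the same constants $\beta_1,\beta_2,\beta_3,\mu_1$ (here (H5.1)(v) is used). Hence Theorem \ref{Thm3.1} applies and yields a unique solution $(x^1,y^1,z^1,k^1)\in\mathcal{H}_{\mathbb{F}}^2(0,T;\mathbb{R}^n\times\mathbb{R}^m\times\mathbb{R}^{m\times d})\times\mathcal{K}^2_{\mathbb{F},\lambda}(0,T;\mathbb{R}^m)$.

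Next I would set $\widehat{x}^\rho=\rho^{-1}(x_\rho-x)-x^1$ and define $\widehat{y}^\rho,\widehat{z}^\rho,\widehat{k}^\rho$ analogously. Subtracting (\ref{Eq6.1}) written for $u$ from the same equation written for $u_\rho$, dividing by $\rho$, and then subtracting (\ref{Eq6.2}), I obtain a linear mean-field FBSDE for $(\widehat{x}^\rho,\widehat{y}^\rho,\widehat{z}^\rho,\widehat{k}^\rho)$ whose leading coefficients are exactly the linearized derivatives appearing in (\ref{Eq6.2}), plus source terms. By the fundamental theorem of calculus each coefficient increment, e.g. $b(\cdot,\pi^{u_\rho},u_\rho)-b(\cdot,\pi^{u},u)$, is written as the integral over $s\in[0,1]$ of the derivatives of $b$ at the interpolated argument $\pi^{u}+s(\pi^{u_\rho}-\pi^{u})$ applied to the (scaled) increments of the state and to $v$; the source terms, which I denote $I^\rho$, then collect the differences between these interpolated derivatives and the derivatives at $\theta(t,e)$, multiplied by the scaled increments, together with the analogous terminal contribution coming from $\Phi$.

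To close the argument I would apply It\^o's formula to $\langle G\widehat{x}^\rho(t),\widehat{y}^\rho(t)\rangle$ and use the monotonicity (H3.2), exactly as in the proofs of Theorem \ref{Thm3.1} and Theorem \ref{Thm4.1}, to absorb the homogeneous linear terms; combined with the standard forward SDE and backward BSDE energy estimates (the analogues of (\ref{equ 2018103103})--(\ref{equ 2018103101})), this bounds $\sup_{t}E|\widehat{x}^\rho(t)|^2+\norm{(\widehat{y}^\rho,\widehat{z}^\rho,\widehat{k}^\rho)}^2$ by a constant times $\norm{I^\rho}^2$. It then remains to prove $\norm{I^\rho}\to0$ as $\rho\to0$. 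For this I would first obtain a bound $\sup_{0<\rho\le1}\big(\sup_t E|\rho^{-1}(x_\rho(t)-x(t))|^2+\cdots\big)<\infty$, uniform in $\rho$, by the same coupled estimate applied to the difference quotients themselves (whose source involves only the bounded $b_v,\sigma_v,h_v,f_v$ times $v$), together with the convergence $x_\rho\to x$, $y_\rho\to y$, $z_\rho\to z$, $k_\rho\to k$, which follows as in Theorem \ref{Thm4.1}. Since the derivatives of $b,\sigma,h,f,\Phi$ are bounded by (H5.1)(ii) and continuous by (H5.1)(i), the interpolated arguments converge to the optimal trajectory and dominated convergence gives $\norm{I^\rho}\to0$. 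The main obstacle is the full coupling: because the perturbation enters both the forward and backward equations, $\widehat{x}^\rho$ and $(\widehat{y}^\rho,\widehat{z}^\rho,\widehat{k}^\rho)$ cannot be estimated separately, and it is precisely the monotonicity-based It\^o computation on $\langle G\cdot,\cdot\rangle$ that links the two energy estimates and makes the a priori bound possible; the accompanying delicacy is justifying the passage to the limit inside the expectation in $I^\rho$, for which the uniform $L^2$-bound on the scaled increments and the boundedness of the derivatives are exactly what is needed.
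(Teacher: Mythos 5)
Your overall strategy coincides with the paper's: both linearize the coefficient increments, both rest on the monotonicity-based energy estimate (It\^o's formula applied to $\langle G\,\cdot\,,\cdot\,\rangle$ of the coupled error), and both identify the limit through uniqueness for the variational equation. The difference is one of execution: the paper keeps the $\rho$-dependent difference-quotient coefficients (its $A(t,e),B(t,e),\dots$) in the \emph{homogeneous} part of the equation satisfied by the scaled increments, shows these coefficients converge to the derivatives at $\theta(t,e)$, and then concludes by the stability property behind Theorem \ref{Thm4.1} --- whose estimate bounds the solution error by coefficient differences evaluated along the \emph{fixed} limit solution, so dominated convergence applies without difficulty. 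You instead freeze the coefficients at $\theta(t,e)$ and push everything else into a source term $I^\rho$.

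This is where your proof has a genuine gap. As you define it, $I^\rho$ is a product of (interpolated derivative minus derivative at $\theta$) --- bounded and converging to $0$ a.e. --- with the \emph{$\rho$-dependent} scaled increments, which you control only by a uniform $L^2$ bound. Dominated convergence does not apply here: a uniform $L^2$ bound does not give uniform integrability of the squares, so the product need not vanish in $L^2$ (on $[0,1]$ with Lebesgue measure, take $D^\rho=\mathbf{1}_{[0,\rho]}$ and scaled increments $\rho^{-1/2}\mathbf{1}_{[0,\rho]}$: both of your hypotheses hold, yet $\Vert D^\rho\cdot\Delta\pi\Vert_{L^2}=1$ for every $\rho$). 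Nor can the offending piece $D^\rho\widehat{\pi}^\rho$ be absorbed into the energy estimate, since $\Vert D^\rho\Vert_\infty$ need not be small, so the resulting inequality does not close. The fix is already contained in your own set-up: keep the interpolated derivatives $\int_0^1\nabla b\big(\pi+s(\pi_\rho-\pi),u+s\rho v\big)\,ds$, etc., as the homogeneous coefficients of the equation for $\widehat{\pi}^\rho=\Delta\pi-\pi^1$ (they are bounded uniformly in $\rho$ and inherit (H3.2) by the same differentiation argument you use at $\theta$, applied at the interpolated points, so the energy constant is uniform in $\rho$); the source then becomes (interpolated derivative minus derivative at $\theta$) applied to the \emph{fixed} functions $(x^1,y^1,z^1,k^1,v)$, and dominated convergence legitimately yields $\Vert I^\rho\Vert\to 0$. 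With this correction your argument is a sound, self-contained variant of the paper's proof, re-deriving the stability estimate instead of citing Theorem \ref{Thm4.1}.
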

\begin{remark}\label{rem5.1}
(i)When $l=b,\sigma, h,f,\Phi$, respectively, $l_x$ is the partial derivative of $l(t,x,y,z,\widetilde{x},\widetilde{y},\widetilde{z},k,\widetilde{k},v)$ with respect to $x$; $l_{\widetilde{x}}$ is the partial derivative of $l(t,x,y,z,\widetilde{x},\widetilde{y},\widetilde{z},k,\widetilde{k},v)$ with respect to $\widetilde{x}$. Similar to $l_y,l_z,l_k,l_{\widetilde{y}},l_{\widetilde{z}},l_{\widetilde{k}},l_v$.\\
(ii)From (H5.1), it is easy to verify that equation (\ref{Eq6.2}) satisfies (H3.1) and (H3.2), then there exists a unique solution $(x^1, y^1, z^1,k^1)$ of linear mean-field FBSDE (\ref{Eq6.2}).
\end{remark}
\begin{proof}
Let $\widehat{x}(t)=x_\rho(t)-x(t), \  \widehat{y}(t)=y_\rho(t)-y(t), \ \widehat{z}(t)=z_\rho(t)-z(t), \ \widehat{k}(t,e)=k_\rho(t,e)-k(t,e)$.
Then
\begin{equation}\label{equ 2017121901}
\left\{
\begin{aligned}
\ d\widehat{x}(t)=&\int_E E'\Big[b\big(t,\pi_\rho(t,e),u_\rho(t)\big)-b\big(t,\pi(t,e),u(t)\big)\Big]\lambda(de)dt\\
 +&\int_EE'\Big[\sigma\big(t,\pi_\rho(t,e),u_\rho(t)\big)-\sigma\big(t,\pi(t,e),u(t)\big)\Big]\lambda(de)dB_t\\
\ +&\int_EE'\Big[h\big(t,\pi_\rho(t-,e),u_\rho(t),e\big)
-h\big(t,\pi(t-,e),u(t),e\big)\Big]\widetilde{\mu}(dtde),\\
-d\widehat{y}(t)=&\ \int_EE'\Big[f\big(t,\pi_\rho(t,e),u_\rho(t))-f\big(t,\pi(t,e),u(t)\big)\Big]\lambda(de)dt\\
&-\widehat{z}(t)dB_t-\int_E\widehat{k}(t,e)
\widetilde{\mu}(dtde),\\
\widehat{x}(0)=&\ 0, \ \widehat{y}(T)=E'\Big[\Phi\big(x_\rho(T),(x_\rho(T))'\big)-\Phi\big(x(T),(x(T))'\big)\Big],
\end{aligned}
\right.
\end{equation}
where $\pi(t,e)=\pi^u(t,e)$, $\pi_\rho(t,e)=\pi^{u_\rho}(t,e)$, $\pi(t-,e)$ and $\pi_\rho(t-,e)$ are similarly defined.
From Theorem \ref{Thm4.1}, it is easy to know that $\big(\widehat{x}(\cdot), \widehat{y}(\cdot), \widehat{z}(\cdot), \widehat{k}(\cdot,\cdot)\big)$\ converges to $0$ in $\big(M_{\mathbb{F}}^2(0,T)\big)^3\times\mathcal{K}_{\mathbb{F}\lambda}^2(0,T)$  as $\rho$ tends to $0$. Now, we define
$
\Delta l(t)=\frac{l_\rho(t)-l(t)}{\rho},\,\, l=x,y,z,\ \
\Delta k(t,e)=\frac{k_\rho(t,e)-k(t,e)}{\rho}.
$
Then, from (\ref{equ 2017121901}) we have
\begin{equation}
\left\{
\begin{aligned}
\ d\Delta{x}(t)=&\int_E E'\Big[\bar{b}\big(t,\Delta\pi(t,e),v(t)\big)\Big]\lambda(de)dt +\int_EE'\Big[\bar{\sigma}\big(t,\Delta\pi(t),v(t)\big)\Big]\lambda(de)dB_t\\
\ +&\int_EE'\Big[\bar{h}\big(t,\Delta\pi(t-,e),v(t),e\big)\Big]\widetilde{\mu}(dtde),\\
-d\Delta{y}(t)=&\ \int_EE'\Big[\bar{f}\big(t,\Delta\pi(t,e),v(t)\big)\Big]\lambda(de)dt
-\Delta{z}(t)dB_t-\int_E\Delta{k}(t,e)
\widetilde{\mu}(dtde),\\
\Delta{x}(0)=&\ 0, \ \Delta{y}(T)=E'\Big[M(T)\Delta x(T)+N(T)(\Delta x(T))'\Big],
\end{aligned}
\right.
\end{equation}
where $$\Delta \pi(t,e)=(\Delta x(t),\ \Delta y(t),\ \Delta z(t),\ \Delta k(t,e),\ (\Delta x(t))',\ (\Delta y(t))',\ (\Delta z(t))',\ (\Delta k(t,e))'),$$
$$\Delta \pi(t-,e)=(\Delta x(t-),\ \Delta y(t-),\ \Delta z(t),\ \Delta k(t,e),\ (\Delta x(t-))',\ (\Delta y(t-))',\ (\Delta z(t))',\ (\Delta k(t,e))'),$$
\begin{equation}\nonumber
\begin{aligned}
&\bar{b}(t,\ x,\ y,\ z,\ k,\ \widetilde{x},\ \widetilde{y},\ \widetilde{z},\ \widetilde{k},\  v)\\
=&A(t,e)x+B(t,e)y+C(t,e)z+D(t,e)k+E(t,e)\widetilde{x}+F(t,e)\widetilde{y}+G(t,e)\widetilde{z}+H(t,e)\widetilde{k}+I(t,e)v,\\
\end{aligned}
\end{equation}
and
\begin{equation*}
\begin{aligned}
 &A(t,e)\Delta x(t)=\frac{1}{\rho}\Big[b\big(t,x_\rho(t),y_\rho(t),\cdots,u_\rho(t) \big)-b\big(t,x(t),y_\rho(t),\cdots, u_\rho(t)\big)\Big],\\
&B(t,e)\Delta y(t)=\frac{1}{\rho}\Big[b\big(t,x(t),y_\rho(t),z_\rho(t),\cdots,u_\rho(t) \big)-b\big(t,x(t),y(t),z_\rho(t),\cdots,u_\rho(t)\big)\Big],\\
&C(t,e)\Delta z(t)=\frac{1}{\rho}\Big[b\big(t,x(t),y(t),z_\rho(t),k_\rho(t,e),\cdots,u_\rho(t) \big)-b\big(t,x(t),y(t),z(t),k_\rho(t,e),\cdots,u_\rho(t)\big)\Big],\\
&D(t,e)\Delta k(t,e)=\frac{1}{\rho}\Big[b\big(t,\cdots,z(t),k_\rho(t,e),(x_\rho(t))',\cdots \big)-b\big(t,\cdots,z(t),k(t,e),(x_\rho(t))',\cdots\big)\Big],\\
 &E(t,e)(\Delta x(t))'=\frac{1}{\rho}\Big[b\big(t,\cdots,k(t,e),(x_\rho(t))',(y_\rho(t))',\cdots \big)-b\big(t,\cdots,k(t,e),(x(t))',(y_\rho(t))',\cdots\big)\Big],\\
 &F(t,e)(\Delta y(t))'=\frac{1}{\rho}\Big[b\big(t,\cdots,(x(t))',(y_\rho(t))',(z_\rho(t))',\cdots \big)-b\big(t,\cdots,(x(t))',(y(t))',(z_\rho(t))',\cdots\big)\Big],\\
 &G(t,e)(\Delta z(t))'=\frac{1}{\rho}\Big[b\big(t,\cdots,(y(t))',(z_\rho(t))',(k_\rho(t,e))',u_\rho(t) \big)-b\big(t,\cdots,(y(t))',(z(t))',(k_\rho(t,e))',u_\rho(t)\big)\Big],\\
 &H(t,e)(\Delta k(t,e))'=\frac{1}{\rho}\Big[b\big(t,\cdots,(z(t))',(k_\rho(t,e))',u_\rho(t) \big)-b\big(t,\cdots,(z(t))',(k(t,e))',u_\rho(t)\big)\Big],\\
\end{aligned}
\end{equation*}

\begin{equation*}
\begin{aligned}
 &I(t,e)v(t)=\frac{1}{\rho}\Big[b\big(t,\cdots,(k(t,e))',u_\rho(t)\big)-b\big(t,\cdots,(k(t,e))',u(t)\big)\Big],\\
  &M(T)\Delta x(T)=\frac{1}{\rho}\Big[\Phi\big(x_\rho(T),(x_\rho(T))'\big)-\Phi\big(x(T),(x_\rho(T))'\big)\Big],\\
 &N(T)(\Delta x(T))'=\frac{1}{\rho}\Big[\Phi\big(x(T),(x_\rho(T))'\big)-\Phi\big(x(T),(x(T))'\big)\Big],\\
\end{aligned}
\end{equation*}
where $\bar{\sigma}$, $\bar{h}$, $\bar{f}$ are similarly defined.
From (H5.1) and the fact $(\widehat{x}(\cdot),\widehat{y}(\cdot),\widehat{z}(\cdot),
\widehat{k}(\cdot,\cdot))$ converges to 0 in $\big(\mathcal{H}_{\mathbb{F}}^2(0,T)\big)^3\times\mathcal{K}_{\mathbb{F}\lambda}^2(0,T)$ as $\rho$ tends to 0, we know
\begin{eqnarray*}
\begin{split}
&\lim_{\rho \rightarrow 0} [A(t,e)-b_x\big(\theta(t,e)\big)]=0, \ \ \lim_{\rho \rightarrow 0}[B(t,e)-b_y\big(\theta(t,e)\big)]=0,\ \ \lim_{\rho \rightarrow 0} [C(t,e)-b_z\big(\theta(t,e)\big)]=0,  \\
  & \lim_{\rho \rightarrow 0} [D(t,e)-b_{k}\big(\theta(t,e)\big)]=0,\ \ \lim_{\rho \rightarrow 0} [E(t,e)-b_{\widetilde{x}}\big(\theta(t,e)\big)]=0, \ \ \lim_{\rho \rightarrow 0} [F(t,e)-b_{\widetilde{y}}\big(\theta(t,e)\big)]=0, \\
  &\lim_{\rho \rightarrow 0} [G(t,e)-b_{\widetilde{z}}\big(\theta(t,e)\big)]=0, \ \ \lim_{\rho \rightarrow 0} [H(t,e)-b_{\widetilde{k}}\big(\theta(t,e)\big)]=0,\ \ \lim_{\rho \rightarrow 0} [I(t,e)-b_v\big(\theta(t,e)\big)]=0,
\end{split}
\end{eqnarray*}
and
\begin{equation*}
\begin{aligned}
&\lim_{\rho \rightarrow 0}\Big\{\bar{b}\big(t,\Delta\pi(t,e),v(t)\big)-b_x\big(\theta(t,e)\big)\Delta x(t)-b_y\big(\theta(t,e)\big)\Delta y(t)-b_z\big(\theta(t,e)\big)\Delta z(t)-b_k\big(\theta(t,e)\big)\Delta k(t,e)\\
 &-b_{\widetilde{x}}\big(\theta(t,e)\big)(\Delta x(t))'-b_{\widetilde{y}}\big(\theta(t,e)\big)(\Delta y(t))'-b_{\widetilde{z}}\big(\theta(t,e)\big)(\Delta z(t))'-b_{\widetilde{k}}\big(\theta(t,e)\big)(\Delta k(t,e))'-b_v\big(\theta(t,e)\big)v(t)\Big\}=0,
 \end{aligned}
\end{equation*}
$\bar{\sigma},\ \bar{h},\ \bar{f}$, $\Delta y(T)$ have similar results. From the uniqueness of the solution of equation (\ref{Eq6.2}), we know  $(\Delta x(\cdot),\ \Delta y(\cdot),\ \Delta z(\cdot),\ \Delta k(\cdot,\cdot))$\ converges to $(x^1(\cdot),\ y^1(\cdot),\ z^1(\cdot),\ k^1(\cdot,\cdot))$ in $\big(\mathcal{H}_{\mathbb{F}}^2(0,T)\big)^3\times\mathcal{K}_{\mathbb{F}\lambda}^2(0,T)$ as $\rho$ tends to $0$.
\end{proof}

Because $u(\cdot)$\ is an optimal control, then
\begin{equation}\label{Eq6.3}
\rho^{-1}[J(u(\cdot)+\rho v(\cdot))-J(u(\cdot))]\geq 0.
\end{equation}
Using the similar approach of Lemma \ref{lem6.1}, from (\ref{Eq6.3}) we have the following results.
\begin{lemma}\label{lem6.2}
We suppose (H5.1) holds. Then, the following variational inequality holds:
\begin{equation}\label{Eq6.4}
\begin{aligned}
&E\Big\{\int_0^T\int_EE'[g_x\big(\theta(t,e)\big)x^1(t)+g_y\big(\theta(t,e)\big)y^1(t)+g_z\big(\theta(t,e)\big)z^1(t)+g_k\big(\theta(t,e)\big)k^1(t,e)
+g_{\widetilde{x}}\big(\theta(t,e)\big)(x^1(t))'\\
&\ \ +g_{\widetilde{y}}\big(\theta(t,e)\big)(y^1(t))'+g_{\widetilde{z}}\big(\theta(t,e)\big)(z^1(t))'+g_{\widetilde{k}}\big(\theta(t,e)\big)(k^1(t,e))' +g_v\big(\theta(t,e)\big)v(t)]\lambda(de)dt \\
      &\ \ +E'[\varphi_x\big(x(T),(x(T))'\big)x^1(T)+\varphi_{\widetilde{x}}\big(x(T),(x(T))'\big)(x^1(T))']+\gamma_y(y(0))y^1(0)\Big\}\geq 0.
\end{aligned}
\end{equation}
\end{lemma}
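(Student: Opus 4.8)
The plan is to identify the left-hand side of the variational inequality (\ref{Eq6.4}) as the directional (G\^ateaux) derivative $\lim_{\rho\to0}\rho^{-1}[J(u(\cdot)+\rho v(\cdot))-J(u(\cdot))]$ and then invoke the optimality inequality (\ref{Eq6.3}) to conclude that this derivative is nonnegative. First I would split the difference quotient into the three contributions coming from the three terms of the cost functional (\ref{equ 2018012301}): the running cost built from $g$, the terminal cost built from $\varphi$, and the initial cost $\gamma(y^v(0))$.

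For the running-cost term I would reuse verbatim the device already employed in the proof of Lemma \ref{lem6.1}. Writing $\Delta l(t)=\rho^{-1}(l_\rho(t)-l(t))$ for $l=x,y,z$ and $\Delta k(t,e)=\rho^{-1}(k_\rho(t,e)-k(t,e))$, I would express
$$g\big(t,\pi_\rho(t,e),u_\rho(t)\big)-g\big(t,\pi(t,e),u(t)\big)$$
as a telescoping sum in which each successive difference changes a single argument; each such difference equals a difference-quotient coefficient times the corresponding $\Delta l$ (respectively $(\Delta l)'$, or $v$). Exactly as in Lemma \ref{lem6.1}, the continuous differentiability of $g$ in (H5.1)(i) together with the convergence of $(\widehat{x},\widehat{y},\widehat{z},\widehat{k})$ to $0$ forces every coefficient to converge to the corresponding partial derivative $g_x,\dots,g_{\widetilde{k}},g_v$ evaluated at $\theta(t,e)$. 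The terminal term is handled the same way, splitting $\varphi(x_\rho(T),(x_\rho(T))')-\varphi(x(T),(x(T))')$ into an $x$-difference and an $\widetilde{x}$-difference to produce $\varphi_x$ and $\varphi_{\widetilde{x}}$ at $(x(T),(x(T))')$, and the initial term $\rho^{-1}(\gamma(y_\rho(0))-\gamma(y(0)))$ likewise produces $\gamma_y(y(0))y^1(0)$.

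To pass to the limit I would combine Lemma \ref{lem6.1}, which gives $\Delta x\to x^1$, $\Delta y\to y^1$, $\Delta z\to z^1$ in $\mathcal{H}_{\mathbb{F}}^2(0,T)$ and $\Delta k\to k^1$ in $\mathcal{K}_{\mathbb{F}\lambda}^2(0,T)$, with the pointwise convergence of the coefficients just described; each product then converges in $L^1$ because one factor converges in $L^2$ while the other is bounded in $L^2$ uniformly in $\rho$. Throughout I would track the mean-field bookkeeping carefully, keeping the inner copy-expectation $E'$ inside the outer $E$ so that the perturbation of a copy variable $(x(t))'$ produces precisely the term $g_{\widetilde{x}}(\theta(t,e))(x^1(t))'$ (and similarly for $y,z,k$), with no interchange of the primed and unprimed slots needed at this stage. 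Assembling the three limits and using (\ref{Eq6.3}) then yields (\ref{Eq6.4}).

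The main obstacle will be justifying this interchange of limit and expectation under the merely linear growth allowed for the derivatives of $g$, $\varphi$, $\gamma$ in (H5.1)(iii)--(iv): unlike $b,\sigma,h,f,\Phi$, whose derivatives are bounded by (H5.1)(ii), these integrands are only dominated by $C(1+|x|+\cdots+|v|)$, so the argument requires a uniform-in-$\rho$ $L^2$ bound on the perturbed states $(x_\rho,y_\rho,z_\rho,k_\rho)$. Such a bound is available because $u_\rho=u+\rho v$ stays bounded in $\mathcal{H}_{\bar{\mathbb{F}}}^2(0,T;\mathbb{R}^k)$ for $\rho\in[0,1]$, so the a priori estimates underlying Theorem \ref{Thm3.1} and Lemma \ref{lem6.1} control the states uniformly; this supplies the $L^2$-dominating factor and, via Cauchy--Schwarz together with dominated convergence, secures the $L^1$-convergence of every product term.
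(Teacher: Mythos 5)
Your proposal is correct and takes essentially the same route as the paper: the paper's entire argument for Lemma \ref{lem6.2} is the observation that the optimality inequality (\ref{Eq6.3}) combined with ``the similar approach of Lemma \ref{lem6.1}'' (i.e., writing the difference quotient of the cost functional with telescoping difference-quotient coefficients and passing to the limit using the convergence of $(\Delta x,\Delta y,\Delta z,\Delta k)$ to $(x^1,y^1,z^1,k^1)$) yields (\ref{Eq6.4}). In fact you supply more detail than the paper does, in particular the uniform-in-$\rho$ $L^2$ bounds on the perturbed states needed to handle the merely linear-growth derivatives of $g$, $\varphi$, $\gamma$ in (H5.1)(iii)--(iv), a point the paper leaves implicit.
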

Now we introduce the following adjoint mean-field FBSDE with jumps to equation (\ref{Eq6.2}):
\begin{equation}\label{Eq6.5}
\left\{
\begin{aligned}
dp(t)=&-\int_E E' \Big\{b^T_y(\theta(t,e))q(t)+\sigma^T_y(\theta(t,e))m(t)+h_y^T\big(\theta(t-,e)\big)n(t,e)-f_y^T\big(\theta(t,e)\big)p(t)\\
&\ +g_y\big(\theta(t,e)\big)+b^T_{\widetilde{y}}(\rho(t,e))(q(t))' +\sigma^T_{\widetilde{y}}(\rho(t,e))(m(t))' +h_{\widetilde{y}}^T\big(\rho(t-,e)\big)(n(t,e))'\\
&\ -f_{\widetilde{y}}^T\big(\rho(t,e)\big)(p(t))'+g_{\widetilde{y}}\big(\rho(t,e)\big)\Big\}\lambda(de)dt\\
&\ -\int_E E'\Big\{b^T_z(\theta(t,e))q(t)+\sigma^T_z(\theta(t,e))m(t)+h_z^T\big(\theta(t-,e)\big)n(t,e)-f_z^T\big(\theta(t,e)\big)p(t)\\
&\ +g_z\big(\theta(t,e)\big)+b^T_{\widetilde{z}}(\rho(t,e))(q(t))'+\sigma^T_{\widetilde{z}}(\rho(t,e))(m(t))'+h_{\widetilde{z}}^T\big(\rho(t-,e)\big)(n(t,e))'\\
&\ -f_{\widetilde{z}}^T\big(\rho(t,e)\big)(p(t))'+g_{\widetilde{z}}\big(\rho(t,e)\big)\Big\}\lambda(de)dB_t\\
&\ -\int_E E'\Big\{b^T_k(\theta(t,e))q(t)+\sigma^T_k(\theta(t,e))m(t)+h_k^T\big(\theta(t-,e)\big)n(t,e)-f_k^T\big(\theta(t,e)\big)p(t)\\
&\ +g_k\big(\theta(t,e)\big)+b^T_{\widetilde{k}}(\rho(t,e))(q(t))'+\sigma^T_{\widetilde{k}}(\rho(t,e))(m(t))'+h_{\widetilde{k}}^T\big(\rho(t-,e)\big)(n(t,e))'\\
&\ -f_{\widetilde{k}}^T\big(\rho(t,e)\big)(p(t))'+g_{\widetilde{k}}\big(\rho(t,e)\big)\Big\}\widetilde{\mu}(dtde),\\
-dq(t)=&\ \int_E E'\Big\{b^T_x(\theta(t,e))q(t)+\sigma^T_x(\theta(t,e))m(t)+h_x^T\big(\theta(t-,e)\big)n(t,e)-f_x^T\big(\theta(t,e)\big)p(t)\\
&\ +g_x\big(\theta(t,e)\big)+b^T_{\widetilde{x}}(\rho(t,e))(q(t))'+\sigma^T_{\widetilde{x}}(\rho(t,e))(m(t))'+h_{\widetilde{x}}^T\big(\rho(t-,e)\big)(n(t,e))'\\
&\ -f_{\widetilde{x}}^T\big(\rho(t,e)\big)(p(t))'+g_{\widetilde{x}}\big(\rho(t,e)\big)\Big\}\lambda(de)dt-m(t)dB_t-\int_En(t,e)\widetilde{\mu}(dtde),\\
p(0)=&\ -\gamma_y(y(0)),\\
 q(T)=&E'[\varphi_x\big(x(T),(x(T))'\big)+\varphi_{\widetilde{x}}\big((x(T))',x(T)\big)\\
 &-\Phi_x\big(x(T),(x(T))'\big)p(T)-\Phi_{\widetilde{x}}\big((x(T))',x(T)\big)(p(T))'].
\end{aligned}
\right.
\end{equation}

  From Theorem 3.1, we know there exists a unique quadruple $(p(\cdot), q(\cdot), m(\cdot), n(\cdot,\cdot))$\ satisfying (\ref{Eq6.5}).\\
 We define the Hamiltonian function $H$ as follows:
 \begin{equation}\label{Eq6.6}
\begin{aligned}
& H(t,x,y,z,k,\widetilde{x},\widetilde{y},\widetilde{z},\widetilde{k},v,p,q,m,n,e)=\langle q,b(t,x,y,z,k,\widetilde{x},\widetilde{y},\widetilde{z},\widetilde{k},v)\rangle+\langle m,\sigma(t,x,y,z,k,\widetilde{x},\widetilde{y},\widetilde{z},\widetilde{k},v)\rangle\\
&+\langle n,h(t,x,y,z,k,\widetilde{x},\widetilde{y},\widetilde{z},\widetilde{k},v,e)\rangle-\langle p,f(t,x,y,z,k,\widetilde{x},\widetilde{y},\widetilde{z},\widetilde{k},v)\rangle
+g(t,x,y,z,k,\widetilde{x},\widetilde{y},\widetilde{z},\widetilde{k},v).
 \end{aligned}
\end{equation}
 Then we have the following maximum principle.
 \begin{theorem}\label{Thm6.1}
Let $u(\cdot)$\ be an optimal control and let $(x(\cdot),\ y(\cdot),\ z(\cdot),\ k(\cdot,\cdot))$ be the corresponding trajectory. Then, we have
 \begin{equation}\label{smp}
\begin{aligned}
\int_EE'\big\langle H_v\big(t,\pi(t,e),\ u(t),\ p(t),\ q(t),\ m(t),\ n(t,e),\ e\big),\ v-u(t)\big\rangle\lambda(de)\geq0, \forall\ v\in U, \mbox{dtdP-a.e.},
 \end{aligned}
\end{equation}
where $\pi(t,e)=(x(t), y(t), z(t), k(t,e), (x(t))', (y(t))', (z(t))', (k(t,e))')$, $(p(\cdot), q(\cdot), m(\cdot), n(\cdot,\cdot))$\ is the solution of the adjoint equation (\ref{Eq6.5}).
\end{theorem}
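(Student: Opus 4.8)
The plan is to combine the variational inequality \eqref{Eq6.4} of Lemma \ref{lem6.2} with a duality identity relating the variational equation \eqref{Eq6.2} to the adjoint equation \eqref{Eq6.5}, and then to strip off the expectation and the time integral by exploiting the arbitrariness of the perturbation direction. The adjoint system \eqref{Eq6.5} is arranged so that $q$ is the (backward) costate of the forward process $x^1$ and $p$ is the (forward) costate of the backward process $y^1$. Accordingly, I would apply It\^o's formula to $\langle q(t),x^1(t)\rangle+\langle p(t),y^1(t)\rangle$ on $[0,T]$, integrate, and take the expectation $E$, which annihilates the $dB_t$ and $\widetilde{\mu}(dt,de)$ martingale integrals since all processes involved lie in the relevant $L^2$-spaces.

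Computing the boundary terms directly, $\langle q(0),x^1(0)\rangle=0$ because $x^1(0)=0$, while $-\langle p(0),y^1(0)\rangle=\langle\gamma_y(y(0)),y^1(0)\rangle$; moreover the $\Phi$-terms carried by $q(T)$ cancel against those produced by $y^1(T)=E'[\Phi_x x^1(T)+\Phi_{\widetilde x}(x^1(T))']$, so that the surviving boundary contribution is exactly the terminal part of \eqref{Eq6.4}. Computing the same quantity through the integrated dynamics, the drift, diffusion and jump coefficients of \eqref{Eq6.5} are constructed precisely so that every cross-term carrying $x^1,y^1,z^1,k^1$ cancels up to the running forcing $g_x x^1+\cdots+g_{\widetilde k}(k^1)'$, whereas the control forcings $b_v v,\sigma_v v,h_v v,f_v v$ of \eqref{Eq6.2} survive paired with $q,m,n,p$. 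Writing $R$ for the running integral in \eqref{Eq6.4} other than its $g_v v$ term, matching the two evaluations gives
\begin{equation}\nonumber
R+E\big[E'[\varphi_x x^1(T)+\varphi_{\widetilde x}(x^1(T))']+\gamma_y(y(0))y^1(0)\big]=E\Big[\int_0^T\int_E E'\big\{\langle q,b_v v\rangle+\langle m,\sigma_v v\rangle+\langle n,h_v v\rangle-\langle p,f_v v\rangle\big\}\lambda(de)dt\Big].
\end{equation}
Adding back the $g_v v$ term and recalling the definition \eqref{Eq6.6} of $H$ then collapses \eqref{Eq6.4} to $E[\int_0^T\int_E E'\langle H_v(\theta(t,e)),v(t)\rangle\lambda(de)dt]\geq0$ for every admissible direction $v$.

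The delicate point throughout is the mean-field bookkeeping. A term such as $E[E'[g_{\widetilde x}(\theta)(x^1)']]$ must be converted into $E[E'[g_{\widetilde x}(\rho)x^1]]$ by the independent-copy identity $E[E'[F(\xi,\xi')]]=E[E'[F(\xi',\xi)]]$, and it is exactly this swap that forces \eqref{Eq6.5} to evaluate the $\widetilde{\cdot}$-derivatives at the interchanged argument $\rho(t,e)$ and to carry the primed costates $(q)',(m)',(n)',(p)'$. Making every $x^1,y^1,z^1,k^1$ together with each of their primed copies cancel, while simultaneously tracking the compensated jump covariations (which pair $n$ with the $h$-derivatives against $\lambda(de)dt$), is the main obstacle; it is a careful but ultimately routine computation once the swap is applied systematically at each argument.

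Finally, to pass from the integrated inequality to the pointwise claim I would use the convexity of $U$: for fixed $v_0\in U$ and any measurable $A\subset[0,T]\times\Omega$, the direction $(v_0-u(t))\mathbf{1}_A$ is admissible, so the integrated inequality yields $E[\int_0^T\mathbf{1}_A\int_E E'\langle H_v,v_0-u(t)\rangle\lambda(de)dt]\geq0$. The arbitrariness of $A$ then gives $\int_E E'\langle H_v,v_0-u(t)\rangle\lambda(de)\geq0$, $dtdP$-a.e., and running $v_0$ through a countable dense subset of $U$ together with the continuity in $v$ upgrades this to hold simultaneously for all $v\in U$, which is precisely \eqref{smp}.
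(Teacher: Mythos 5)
Your proposal is correct, and its core is the same as the paper's: both apply It\^o's formula to $\langle x^1(t),q(t)\rangle+\langle y^1(t),p(t)\rangle$, use the terminal cancellation between the $\Phi$-terms in $q(T)$ and $y^1(T)$, the independent-copy swap $E\big[E'[F(\xi,\xi')]\big]=E\big[E'[F(\xi',\xi)]\big]$ to absorb the $\widetilde{\cdot}$-derivative terms (this is exactly why \eqref{Eq6.5} evaluates them at $\rho(t,e)$ with primed costates), and then the variational inequality \eqref{Eq6.4} together with \eqref{Eq6.6} to obtain the integrated inequality $E\int_0^T\int_E E'\langle H_v(t,e),v(t)\rangle\lambda(de)\,dt\geq 0$. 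Where you genuinely diverge is the localization step. The paper takes a perturbation supported on $[t,t+\varepsilon]$, divides by $\varepsilon$, sends $\varepsilon\to 0$ (a Lebesgue-point argument in time), and then conditions on sets $A\in\mathcal{F}_t$ to remove the expectation; you instead insert indicators $\mathbf{1}_A$ of subsets of $[0,T]\times\Omega$ directly and conclude by the arbitrariness of $A$, then upgrade to all $v\in U$ simultaneously via a countable dense subset and continuity of $H_v$ in $v$. Your route is arguably cleaner and, notably, more careful than the paper about the quantifier ``for all $v\in U$, $dtdP$-a.e.'', which the paper handles only implicitly. One small correction: $A$ cannot be an arbitrary measurable subset of $[0,T]\times\Omega$; it must be progressively measurable (equivalently, $\mathbf{1}_A$ adapted), since otherwise $u+(v_0-u)\mathbf{1}_A$ fails to be an admissible control. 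This costs nothing, because the set you ultimately need, $A=\{(t,\omega):\int_E E'\langle H_v(t,e),v_0-u(t)\rangle\lambda(de)<0\}$, is progressively measurable, so choosing it forces that set to be $dt\otimes dP$-null.
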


\begin{proof}
Applying It\^o's formula to $\langle x^1(t),q(t)\rangle+\langle y^1(t),p(t)\rangle$, from equations (\ref{Eq6.2}) and (\ref{Eq6.5}), (H3.1),
 (H3.2) and (H5.1), with the help of (\ref{Eq6.4}) and (\ref{Eq6.6}), for $v(\cdot)$ such that $u(\cdot)+v(\cdot)\in\mathcal{U}_{ad}$, we get
\begin{equation}\label{equ 2017122101}
E\int_0^T\int_EE'\langle H_v\big(t,\pi(t,e),\ u(t),\ p(t),\ q(t),\ m(t),\ n(t,e),\ e\big), v(t)\rangle\lambda(de) dt\geq0.
\end{equation}
Denote $H_v(t,e)=H_v\big(t,\pi(t,e),\ u(t),\ p(t),\ q(t),\ m(t),\ n(t,e),\ e\big)$. For any $\bar{v}(\cdot)\in\mathcal{U}_{ad}$, we define
$$v(s)=
\left\{
\begin{array}{ll}
\bar{v}(s)-u(s),& s\in [t,t+\varepsilon],\\
0,& \text{otherwise}.
\end{array}
\right.
$$
Then from (\ref{equ 2017122101}) we get
\begin{equation}\label{equ 2017122102}
\frac{1}{\varepsilon}E\int_t^{t+\varepsilon}\int_EE'\langle H_v(s,e), \bar{v}(s)-u(s)\rangle\lambda(de) dt\geq0.
\end{equation}
Putting $\varepsilon\rightarrow 0$, we have ${E}\int_EE'\langle H_v(t,e), \bar{v}(t)-u(t)\rangle\lambda(de)\geq 0$, $a.e.$
Then, let $\bar{v}(t)=vI_A+u(t)I_{A^c}$, for $A\in\mathcal{F}_t$ and $v\in U$, we can get that
 \begin{equation}
\begin{aligned}
0\leq& {E}\int_EE'\langle H_v(t,e), \bar{v}(t)-u(t)\rangle\lambda(de)=E[\int_EE'\langle H_v(t,e), v-u(t)\rangle \lambda(de)\cdot I_A]\\
=&E[\int_EE'\langle H_v(t,e), v-u(t)\rangle \lambda(de)| \mathcal{F}_t]=\int_EE'\langle H_v(t,e), v-u(t)\rangle\lambda(de),\ \text{dtdP-a.e.}
 \end{aligned}
 \end{equation}
\end{proof}

We now study assumptions, under which the necessary condition (\ref{smp}) becomes a sufficient one.
\begin{theorem}(Sufficient conditions for the optimality of the control)
Let (H5.1) hold and the control $u(\cdot)$\ satisfies (\ref{smp}), where $(p(\cdot), q(\cdot), m(\cdot), n(\cdot,\cdot))$\ is the solution of the adjoint equation (\ref{Eq6.5}). We further assume that the following convexity conditions:\\
(1) $\Phi(x,\widetilde{x})=ax+b\widetilde{x},\ a,b\in\mathbb{R}^n$;\\
(2) $\varphi$ is convex with respect to $x,\widetilde{x}$;\\
(3) $\gamma$ is convex with respect to $y$;\\
(4) Hamiltonian function $H$ is convex with respect to $(x,y,z,k,\widetilde{x},\widetilde{y},\widetilde{z},\widetilde{k},v)$.\\
Then $u$ is an optimal control.
\end{theorem}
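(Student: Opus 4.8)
The plan is to prove optimality directly, by showing that $J(v(\cdot)) \geq J(u(\cdot))$ for every admissible $v(\cdot) \in \mathcal{U}_{ad}$. Let $(x^v, y^v, z^v, k^v)$ be the trajectory driven by $v(\cdot)$, let $(x,y,z,k)$ be the optimal trajectory associated with $u(\cdot)$, and abbreviate $\widehat{x} = x^v - x$, $\widehat{y} = y^v - y$, $\widehat{z} = z^v - z$, $\widehat{k} = k^v - k$. From the definition (\ref{equ 2018012301}) the cost gap $J(v) - J(u)$ breaks into a running-cost difference in $g$, a terminal difference in $\varphi$, and an initial difference in $\gamma$. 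The convexity assumptions (2) and (3) give at once the first-order lower bounds $\varphi(x^v(T),(x^v(T))') - \varphi(x(T),(x(T))') \geq \varphi_x \widehat{x}(T) + \varphi_{\widetilde{x}}(\widehat{x}(T))'$ and $\gamma(y^v(0)) - \gamma(y(0)) \geq \gamma_y(y(0))\widehat{y}(0)$, with all derivatives taken along the optimal trajectory.

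Next I would rewrite the running-cost term through the Hamiltonian. By the definition (\ref{Eq6.6}), $g = H - \langle q,b\rangle - \langle m,\sigma\rangle - \langle n,h\rangle + \langle p,f\rangle$, so the difference of $g$ between the two controls equals the increment of $H$ minus the adjoint-weighted increments of $b,\sigma,h,f$. To dispose of those adjoint-weighted increments and of the boundary terms, I would apply It\^o's formula to $\langle \widehat{x}(t), q(t)\rangle + \langle \widehat{y}(t), p(t)\rangle$ on $[0,T]$, using the state dynamics (\ref{Eq6.1}) for $d\widehat{x}, d\widehat{y}$ and the adjoint dynamics (\ref{Eq6.5}) for $dp, dq$ together with the martingale integrands $m, n$.

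The boundary contributions are built to match the convexity bounds above: since $\widehat{x}(0)=0$ and $p(0) = -\gamma_y(y(0))$, the initial term reproduces exactly the $\gamma$ bound, while the affine form $\Phi(x,\widetilde{x}) = ax + b\widetilde{x}$ of condition (1) makes $\Phi_x = a$ and $\Phi_{\widetilde{x}} = b$ constant, so that the terminal constraint $\widehat{y}(T) = E'[a\widehat{x}(T)+b(\widehat{x}(T))']$ combines with the terminal condition for $q(T)$ in (\ref{Eq6.5}) to collapse $\langle \widehat{x}(T),q(T)\rangle + \langle \widehat{y}(T),p(T)\rangle$ into precisely the $\varphi$ bound. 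Collecting the expansion, the cost gap is bounded below by
\begin{equation*}
J(v)-J(u)\ \ge\ E\int_0^T\!\!\int_E E'\big[\Delta H - H_x\widehat{x} - H_y\widehat{y} - \cdots - H_{\widetilde{k}}(\widehat{k})'\big]\lambda(de)\,dt\ +\ E\int_0^T\!\!\int_E E'\langle H_v,\, v-u\rangle\lambda(de)\,dt,
\end{equation*}
where $\Delta H$ is the increment of $H$ between the two trajectories and all partial derivatives of $H$ are evaluated along the optimal one. The first integrand is nonnegative by the convexity of $H$ in $(x,y,z,k,\widetilde{x},\widetilde{y},\widetilde{z},\widetilde{k},v)$, which is assumption (4), and the second integral is nonnegative by the optimality condition (\ref{smp}). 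Hence $J(v)-J(u) \geq 0$, proving that $u(\cdot)$ is optimal.

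I expect the main obstacle to be the bookkeeping of the mean-field (primed) terms when matching the It\^o cross-terms to the derivatives of $H$. Because every coefficient depends on both the state and its independent copy, the duality identity $E[E'[\psi(\xi,\xi')]] = E[E'[\psi(\xi',\xi)]]$ must be invoked repeatedly to convert terms weighted by $b_{\widetilde{x}}(\theta),\sigma_{\widetilde{x}}(\theta),\dots$ into the copy-evaluated terms weighted by $b_{\widetilde{x}}(\rho),\sigma_{\widetilde{x}}(\rho),\dots$ that actually appear in the adjoint equation (\ref{Eq6.5}). Keeping the $\theta$- and $\rho$-arguments consistent, and tracking the $t-$ versus $t$ arguments in the jump integrals so that the $\widetilde{\mu}$-compensator produces exactly the $h$-contributions, is the delicate step guaranteeing the clean cancellation that leaves only the $H_v$ term.
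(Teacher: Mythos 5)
Your proposal is correct and follows essentially the same route as the paper: the same decomposition of $J(v)-J(u)$, the same first-order convexity bounds on $\varphi$ and $\gamma$, It\^o's formula applied to $\langle \widehat{x},q\rangle+\langle \widehat{y},p\rangle$ (the paper just computes the two pairings separately and adds them), the same use of the affine $\Phi$ and the duality identity $EE'[\varphi_{\widetilde{x}}(x(T),(x(T))')(\widehat{x}(T))']=EE'[\varphi_{\widetilde{x}}((x(T))',x(T))\widehat{x}(T)]$ to collapse the boundary terms, and finally convexity of $H$ plus the maximum condition (\ref{smp}). The anticipated difficulties you flag (the $\theta$/$\rho$ argument bookkeeping and the $t-$ versus $t$ jump terms) are exactly where the paper's computation does its cancellations, so no gap remains.
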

\begin{proof}
For any $v\in\mathcal{U}_{ad}$, from (\ref{equ 2018012301}) we have
\begin{equation}\label{equ 2018010701}
\begin{aligned}
J(v(\cdot))-J(u(\cdot))
=E\Big[& \int_0^T\int_EE'\big[g\big(t,\pi^v(t,e),v(t)\big)
-g\big(t,\pi(t,e),u(t)\big)\big]\lambda(de)dt\\
&+E'[\varphi\big(x^v(T),(x^v(T))'\big)-\varphi\big(x(T),(x(T))'\big)]+\gamma\big(y^v(0)\big)-\gamma\big(y(0)\big)\Big].
\end{aligned}
\end{equation}
Since $\varphi$ is convex in $x,\tilde{x}$ and $\gamma$ is convex in $y$, we get
\begin{equation}\label{equ 2018010702}
\begin{aligned}
&\gamma\big(y^v(0)\big)-\gamma\big(y(0)\big)\geq \gamma_y\big(y(0)\big)\big(y^v(0)-y(0)\big),\\
&\varphi\big(x^v(T),(x^v(T))'\big)-\varphi\big(x(T),(x(T))'\big)\\
&\geq \varphi_x\big(x(T),(x(T))'\big)\big(x^v(T)-x(T)\big)+\varphi_{\widetilde{x}}\big(x(T),(x(T))'\big)\big((x^v(T))'-(x(T))'\big).
\end{aligned}
\end{equation}
Observe that $EE'[\varphi_{\widetilde{x}}\big(x(T),(x(T))'\big)\big((x^v(T))'-(x(T))'\big)]=EE'[\varphi_{\widetilde{x}}\big((x(T))',x(T)\big)\big(x^v(T)-x(T)\big)]$,  from (\ref{equ 2018010701}) and (\ref{equ 2018010702}), we obtain
\begin{equation}\label{equ 2018011301}
\begin{aligned}
J(v(\cdot))-J(u(\cdot))\geq& E\Big[ \int_0^T\int_EE'\big[g\big(t,\pi^v(t,e),v(t)\big)
-g\big(t,\pi(t,e),u(t)\big)\big]\lambda(de)dt\\
&+E'[\varphi_x\big(x(T),(x(T))'\big)+\varphi_{\widetilde{x}}\big((x(T))',x(T)\big)]\big(x^v(T)-x(T)\big)+\gamma_y\big(y(0)\big)\big(y^v(0)-y(0)\big)\Big].
\end{aligned}
\end{equation}
Denote $H_x(t,e):=H_x\big(t,x(t),y(t),z(t),k(t,e),(x(t))',(y(t))',(z(t))',(k(t,e))',u(t),p(t),q(t),m(t),n(t,e),e\big)$,
$H_{\widetilde{x}}(t,e)$, $H_{y}(t,e)$, $H_{\widetilde{y}}(t,e),$ $H_{z}(t,e)$, $H_{\widetilde{z}}(t,e),$ $H_{k}(t,e),\ H_{\widetilde{k}}(t,e)$, $H_v(t,e)$ are similarly defined.
Applying It\^o's formula to $q(t)\big(x^v(t)-x(t)\big)$ and taking the expectation, we obtain
\begin{equation}\label{equ 2018010703}
\begin{aligned}
&EE'[\varphi_x\big(x(T),(x(T))'\big)\big(x^v(T)-x(T)\big)+\varphi_{\widetilde{x}}\big((x(T))',x(T)\big)\big(x^v(T)-x(T)\big)]\\
&=E\{E'[a\big(x^v(T)-x(T)\big)+b\big((x^v(T))'-(x(T))'\big)]p(T)\}\\
&-E\int_0^T\int_EE'[\big(x^v(t)-x(t)\big)H_x(t,e)+\big(x^v(t)-x(t)\big)'H_{\widetilde{x}}(t,e)]\lambda(de)dt\\
&+E\int_0^T\int_E\Big[q(t)E'[b(t,\pi^v(t,e),v(t))-b(t,\pi(t,e),u(t))]+m(t)E'[\sigma(t,\pi^v(t,e),v(t))\\
&-\sigma(t,\pi(t,e),u(t))]+n(t,e)E'[h\big(t,\pi^v(t-,e),v(t),e\big)-h\big(t,\pi(t-,e),u(t),e\big)]\Big]\lambda(de)dt.
\end{aligned}
\end{equation}
Applying It\^o's formula to $p(t)\big(y^v(t)-y(t)\big)$ and taking the expectation, we obtain
\begin{equation}\label{equ 2018010901}
\begin{aligned}
&E\big\{p(T)\cdot E'[a\big(x^v(T)-x(T)\big)+b\big((x^v(T))'-(x(T))'\big)]+\gamma_y(y(0))(y^v(0)-y(0))\big\}\\
&=-E\int_0^T\int_EE'\Big[\big(y^v(t)-y(t)\big)H_y(t,e)+\big(y^v(t)-y(t)\big)'H_{\widetilde{y}}(t,e)+\big(z^v(t)-z(t)\big)H_z(t,e)\\
&+\big(z^v(t)-z(t)\big)'H_{\widetilde{z}}(t,e)+\big(k^v(t,e)-k(t,e)\big)H_k(t,e)+\big(k^v(t,e)-k(t,e)\big)'H_{\widetilde{k}}(t,e)\Big]\lambda(de)dt\\
&-E\int_0^T\int_Ep(t)E'[f\big(t,\pi^v(t,e),v(t)\big)-f\big(t,\pi(t,e),u(t)\big)]\lambda(de)dt.
\end{aligned}
\end{equation}
Then, from (\ref{equ 2018011301}), (\ref{equ 2018010703}) and (\ref{equ 2018010901}) we have
\begin{equation}\label{equ 2018011302}
\begin{aligned}
&J(v(\cdot))-J(u(\cdot))\geq -E\int_0^T\int_EE'\Big[\big(x^v(t)-x(t)\big)H_x(t,e)+\big(x^v(t)-x(t)\big)'H_{\widetilde{x}}(t,e)+\big(y^v(t)-y(t)\big)H_y(t,e)\\
&+\big(y^v(t)-y(t)\big)'H_{\widetilde{y}}(t,e)+\big(z^v(t)-z(t)\big)H_z(t,e)+\big(z^v(t)-z(t)\big)'H_{\widetilde{z}}(t,e)+\big(k^v(t,e)-k(t,e)\big)H_k(t,e)\\
&+\big(k^v(t,e)-k(t,e)\big)'H_{\widetilde{k}}(t,e)\Big]\lambda(de)dt
+E\int_0^T\int_EE'\Big[H\big(t,\pi^v(t,e),v(t),p(t),q(t),m(t),n(t,e),e\big)\\
&-H\big(t,\pi(t,e),u(t),p(t),q(t),m(t),n(t,e),e\big)\Big]\lambda(de)dt.
\end{aligned}
\end{equation}
From the convexity of $H$, we know
\begin{equation}\label{equ 2018012302}
\begin{aligned}
&H\big(t,\pi^v(t,e),v(t),p(t),q(t),m(t),n(t,e),e\big)-H\big(t,\pi(t,e),u(t),p(t),q(t),m(t),n(t,e),e\big)\\
&\geq \big(x^v(t)-x(t)\big)H_x(t,e)+\big(y^v(t)-y(t)\big)H_y(t,e)+\big(z^v(t)-z(t)\big)H_z(t,e)+\big(k^v(t,e)-k(t,e)\big)H_k(t,e)\\
&+\big(x^v(t)-x(t)\big)'H_{\widetilde{x}}(t,e)+(y^v(t)-y(t)\big)'H_{\widetilde{y}}(t,e)+(z^v(t)-z(t)\big)'H_{\widetilde{z}}(t,e)
+(k^v(t,e)-k(t,e)\big)'H_{\widetilde{k}}(t,e)\\
&+(v(t)-u(t))H_v(t,e).
\end{aligned}
\end{equation}
From (\ref{equ 2018011302}) and (\ref{equ 2018012302}), we get
\begin{equation}\label{equ 2018012303}
\begin{aligned}
&J(v(\cdot))-J(u(\cdot))\geq E\int_0^T\int_EE'[(v(t)-u(t))H_v(t,e)]\lambda(de)dt.
\end{aligned}
\end{equation}
Combined with the maximum condition (\ref{smp}), we obtain the desired result.
\end{proof}

\section{Applications}
\subsection{Application to mean-variance portfolio selection mixed with a mean-field recursive utility}
In this section, we study a mean-variance portfolio selection mixed with a mean-field recursive utility functional
optimization problem applying the maximum principle derived in Section 5.
 We suppose that there is a financial market consisting of two investment possibilities:\\
(i) a risk-free security (e.g., a bond), where the price $S_0(t)$ at time $t$ is given by
\begin{equation}\label{equ 2018012401}
dS_0(t)=\rho_tS_0(t)dt,\ S_0(0)\geq 0,
\end{equation}
where $\rho_t$ is a bounded deterministic function.\\
(ii) a risky security (e.g., a stock), where the price $S_1(t)$ at time $t$ is given by
\begin{equation}\label{equ 2018012402}
dS_1(t)=S_1(t-)\Big[\mu_tdt+\sigma_tdB_t+\int_E\eta(t,e)\widetilde{\mu}(dedt)\Big],\ S_1(0)>0,
\end{equation}
where $\mu_t\neq 0, \sigma_t\neq 0$, $\eta(t,e)$ are bounded deterministic functions and $\mu_t>\rho_t$.
We also assume that $\eta(t,e)>-1$, for all $t$ and $e\in E$ such that $S_1(t)>0$.\\
Assume that $\theta(t)=(\theta_0(t),\theta_1(t))$ is a portfolio which represents the number of units at time $t$
of the risk-free and the risky security. Then the corresponding wealth process $x(t)$ is given by
\begin{equation}\label{equ 2018012403}
x^\theta(t)=\theta_0(t)S_0(t)+\theta_1(t)S_1(t),\ t\geq 0.
\end{equation}
We also assume the portfolio is self-financing, that is,
\begin{equation}\label{equ 2018012404}
x^\theta(t)=x^\theta(0)+\int_0^t\theta_0(s)dS_0(s)+\int_0^t\theta_1(s-)dS_1(s),\ t\geq 0.
\end{equation}
Let $v(t)=\theta_1(t)S_1(t)$ denote the amount invested in the risky security. Then from (\ref{equ 2018012403}) and (\ref{equ 2018012404}),
we get the wealth dynamics:
\begin{equation}\label{equ 2018012405}
dx^v(t)=[\rho_tx^v(t)+(\mu_t-\rho_t)v(t)]dt+\sigma_tv(t)dB_t+\int_E\eta(t,e)v(t-)\widetilde{\mu}(dtde),
\end{equation}
where $x^v(0)=x_0$ is given.

 We consider
a investor, endowed with initial wealth $x_0>0$, who chooses at each time $t$ his or her portfolio strategy $v(t)$. The investor's object is to
find an admissible portfolio strategy $v(\cdot)\in\mathcal{U}_{ad}$ which maximizes the following expected utility functional:
\begin{equation}\label{equ 2018012407}
{J}(v(\cdot))=E[-\frac{1}{2}(x^v(T)-a)^2]+y^v(t)|_{t=0},
\end{equation}
where
\begin{equation}\label{equ 20180605}
\begin{aligned}
&y^v(t)=E\Big[\gamma x^v(T)+\widetilde{\gamma}E[x^v(T)]\\
&+\int_t^T\alpha\rho_sx^v(s)+\widetilde{\alpha}\rho_sE[x^v(s)]+(\mu_s-\rho_s)v(s)-\beta y^v(s)-\widetilde{\beta} E[y^v(s)]ds|\mathcal{F}_t\Big],\ t\in [0,T],
\end{aligned}
\end{equation}
with nonnegative constants  $a, \gamma, \widetilde{\gamma}, \alpha, \widetilde{\alpha}, \beta, \widetilde{\beta}$.
Notice that the investor's utility functional consists of two parts: One part is the terminal reward
$$E[-\frac{1}{2}(x^v(T)-a)^2];$$
The other part is a mean-field recursive utility functional with generator $f(t,x,\widetilde{x},y,\widetilde{y},v)=\alpha\rho_tx+\widetilde{\alpha}\rho_t\widetilde{x}+(\mu_t-\rho_t)v-
\beta y-\widetilde{\beta}\widetilde{y}$. Mean-field recursive utility is an extension to mean-field (and jumps) of the classical recursive utility concept of Duffie and Epstein \cite{DE1} (i.e., $\widetilde{\alpha}=\widetilde{\beta}=\widetilde{\gamma}\equiv0$ in
(\ref{equ 20180605})), the interested reader can referred to \cite{AHO} and the references therein for the concept of mean-field recursive utility.
\begin{remark}\label{rem6.1}
When only the terminal part is considered for the utility functional,
Framstad, et al. \cite{FOS} solved the above mean-variance portfolio selection by using the sufficient maximum principle in Example 4.1. In addition to the
terminal utility functional,
Shi, Wu \cite{SW} also considered a recursive utility functional for the mean-variance portfolio selection problem. Then, we generalize their recursive utility to  mean-field cases in our model, that is we consider mean-variance portfolio selection mixed with a mean-field recursive utility functional.
\end{remark}
We now apply the result of Section 5 to solve the above optimization problem (\ref{equ 2018012405})-(\ref{equ 2018012407}). In fact, in our jump-diffusion framework, the wealth process $x^v(\cdot)$ in (\ref{equ 2018012405}) and mean-field recursive utility process $y^v(\cdot)$ in (\ref{equ 20180605}) can
be regarded as the solution of the following mean-field FBSDEs with jumps:
\begin{equation}\label{equ 2018012408}
\left\{
\begin{split}
dx^v(t)=&[\rho_tx^v(t)+(\mu_t-\rho_t)v(t)]dt+\sigma_tv(t)dB(t)+\int_E\eta(t,e)v(t-)\widetilde{\mu}(dtde),\\
-dy^v(t)=&\Big[\alpha\rho_tx^v(t)+\widetilde{\alpha}\rho_tE[x^v(t)]+(\mu_t-\rho_t)v(t)-\beta y^v(t)-\widetilde{\beta} E[y^v(t)]\Big]dt\\
&-z^v(t)dB(t)-\int_Ek^v(t,e)\widetilde{\mu}(dtde),\\
x^v(0)=&x_0,\ y^v(T)=\gamma x^v(T)+\widetilde{\gamma}E[x^v(T)],
\end{split}
\right.
\end{equation}
and the optimization problem can be rewritten as
\begin{equation}\label{equ 2018012409}
\mathcal{J}(u(\cdot))=\inf_{v\in\mathcal{U}_{ad}}\mathcal{J}(v(\cdot)),
\end{equation}
where $\mathcal{J}(v(\cdot))=-J(v(\cdot)).$

It is easy to verify that all the assumptions in Section 5 are satisfied for this problem.
The related adjoint equations (\ref{Eq6.5}) become the following form
\begin{equation}\label{equ 2018012410}
\left\{
\begin{split}
dp(t)&=-(\beta+\widetilde{\beta}) p(t)dt,\\
-dq(t)&=\rho_t[q(t)-(\alpha+\widetilde{\alpha})p(t)]dt-m(t)dB(t)-\int_En(t,e)\widetilde{\mu}(dtde),\\
p(0)&=1,\ q(T)=x(T)-a-(\alpha+\widetilde{\alpha}) p(T).
\end{split}
\right.
\end{equation}
Obviously, $p(t)=\exp\{-(\beta+\widetilde{\beta})t\}$, $0\leq t\leq T$. The related Hamiltonian function has the following form
\begin{equation}\label{equ 2018101801}
\begin{aligned}
&H(t,x,y,\widetilde{x},\widetilde{y},v;p,q,m,n,e)\\
=&q[\rho_tx+(\mu_t-\rho_t)v]+m\sigma_tv+n\eta(t,e)v
-p[\alpha\rho_tx+\widetilde{\alpha}\rho_t\widetilde{x}+(\mu_t-\rho_t)v-\beta y-\widetilde{\beta}\widetilde{y}].
\end{aligned}
\end{equation}
Since this is a linear expression of $v$, we get from (\ref{smp})
\begin{equation}\label{equ 2018101802}
\big(q(t)-p(t)\big)(\mu_t-\rho_t)+m(t)\sigma_t+\int_En(t,e)\eta(t,e)\lambda(de)=0.
\end{equation}
We set $q(t)=\varphi_tx(t)+\psi_t$, where $\varphi_t$, $\psi_t$ are deterministic differential functions which will be
specified below. Then, from (\ref{equ 2018012410}) we get
\begin{equation}\label{equ 2018101803}
-\rho_tq(t)+(\alpha+\widetilde{\alpha})\rho_tp(t)=\dot{\varphi_t}x(t)+\dot{\psi_t}+\varphi_t\rho_tx(t)+\varphi_t(\mu_t-\rho_t)u(t),
\end{equation}
and
\begin{equation}\label{equ 2018101804}
\begin{aligned}
m(t)&=\varphi_t\sigma_tu(t),\\
n(t,e)&=\varphi_t\eta(t,e)u(t).
\end{aligned}
\end{equation}
Substituting (\ref{equ 2018101804}) into (\ref{equ 2018101802}), we have
\begin{equation}\label{equ 2018101805}
u(t)=\frac{(\mu_t-\rho_t)(-\varphi_tx(t)-\psi_t+p(t))}{\varphi_t\Lambda_t},
\end{equation}
where $\Lambda_t=\sigma_t^2+\int_E\eta^2(t,e)\lambda(de)$. On the other hand, from (\ref{equ 2018101803}) we get
\begin{equation}\label{equ 2018101806}
u(t)=\frac{-x(t)\dot{\varphi_t}-2\rho_tx(t)\varphi_t-\dot{\psi_t}-\rho_t\psi_t+(\alpha+\widetilde{\alpha})p(t)\rho_t}{\varphi_t(\mu_t-\rho_t)}.
\end{equation}
By comparing (\ref{equ 2018101805}) and (\ref{equ 2018101806}), we obtain the following ordinary differential equation
\begin{equation}\label{equ 2018101807}
\left\{
\begin{aligned}
&\dot{\varphi_t}+\big(2\rho_t-\frac{(\mu_t-\rho_t)^2}{\Lambda_t}\big)\varphi_t=0,\ \  \varphi_T=1,\\
&\dot{\psi_t}+\big(\rho_t-\frac{(\mu_t-\rho_t)^2}{\Lambda_t}\big)\psi_t-(\alpha+\widetilde{\alpha})\rho_t^2+\frac{(\mu_t-\rho_t)^2}{\Lambda_t}p(t)=0,\ \
\psi_T=-a-(\alpha+\widetilde{\alpha})p(T).
\end{aligned}
\right.
\end{equation}
Then we obtain
\begin{equation}\label{equ 2018101808}
\varphi_t=\exp\{\int_t^T\big(2\rho_s-\frac{(\mu_s-\rho_s)^2}{\Lambda_s}\big)ds\},\ 0\leq t\leq T,
\end{equation}
and
\begin{equation}\label{equ 2018101809}
\begin{aligned}
\psi_t=&[-a-(\alpha+\widetilde{\alpha})p(T)]\exp\{\int_t^T\big(\rho_s-\frac{(\mu_s-\rho_s)^2}{\Lambda_s}\big)ds\}
-\int_t^T\Big[\big(\frac{(\mu_s-\rho_s)^2}{\Lambda_s}p(s)-(\alpha+\widetilde{\alpha})\rho_s^2\big)\\
&\exp\{\int_s^T(\frac{(\mu_r-\rho_r)^2}{\Lambda_r}-\rho_r)dr\}\Big]ds\cdot
\exp\{\int_t^T(\rho_s-\frac{(\mu_s-\rho_s)^2}{\Lambda_s})ds\}
,\ 0\leq t\leq T.
\end{aligned}
\end{equation}
Finally, by combining the above discussion and Theorem 5.2, we obtain the following theorem.
\begin{theorem}
The optimal solution $u$ of our mean-variance portfolio selection mixed with a mean-field recursive utility (\ref{equ 2018012405}) and (\ref{equ 2018012407}) is given (in feedback form) by
\begin{equation}\label{equ 2018101810}
u(t)=\frac{(\mu_t-\rho_t)(-\varphi_tx(t)-\psi_t+p(t))}{\varphi_t\Lambda_t},
\end{equation}
where $\Lambda_t=\sigma_t^2+\int_E\eta^2(t,e)\lambda(de)$, $p(t)=\exp\{-(\beta+\widetilde{\beta})t\}$, $\varphi_t$ and $\psi_t$ are given by (\ref{equ 2018101808}) and (\ref{equ 2018101809}), respectively.
\end{theorem}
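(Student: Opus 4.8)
The plan is to derive the optimal control as a direct consequence of the sufficient maximum principle, Theorem 5.2, after reducing the problem to an explicitly solvable linear-quadratic system. First I would record that minimizing $\mathcal{J}=-J$ in (\ref{equ 2018012409}) places the data in the framework of Section 5, with the cost functional (\ref{equ 2018012301}) specialized to running cost $g\equiv0$, terminal cost $\varphi(x,\widetilde{x})=\frac{1}{2}(x-a)^2$, initial-cost function $y\mapsto -y$, and backward terminal value driven by $\Phi(x,\widetilde{x})=\gamma x+\widetilde{\gamma}\widetilde{x}$, where $\gamma,\widetilde{\gamma}$ are the constants from (\ref{equ 20180605}). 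I would then verify the four convexity hypotheses of Theorem 5.2: $\Phi$ is affine in $(x,\widetilde{x})$ (condition (1)); $\varphi$ is convex in $(x,\widetilde{x})$ (condition (2)); the initial-cost function is linear, hence convex (condition (3)); and the Hamiltonian (\ref{equ 2018101801}), being affine in $(x,y,z,k,\widetilde{x},\widetilde{y},\widetilde{z},\widetilde{k},v)$, is convex (condition (4)). Consequently any admissible control satisfying the first-order condition (\ref{smp}) is automatically optimal, and it suffices to exhibit such a control in feedback form.

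Next I would specialize the general adjoint equation (\ref{Eq6.5}) to the present affine coefficients, obtaining the decoupled linear system (\ref{equ 2018012410}). The $p$-equation is an autonomous linear ODE with $p(0)=1$, so it integrates immediately to $p(t)=\exp\{-(\beta+\widetilde{\beta})t\}$. The $q$-equation, however, is a genuine mean-field BSDE whose martingale integrands $m(t)$ and $n(t,e)$ cannot be read off until the control is known; this is where the linear feedback ansatz enters.

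The key step is to exploit the linearity of $H$ in $v$. Since $H$ is affine in $v$, the variational inequality (\ref{smp}) degenerates: $H_v$ is independent of $v$, so (\ref{smp}) is equivalent to the stationarity relation (\ref{equ 2018101802}), which alone does not determine $u$. To resolve this I would posit the linear feedback $q(t)=\varphi_t x(t)+\psi_t$ with deterministic $\varphi_t,\psi_t$, apply It\^o's formula to $\varphi_t x(t)+\psi_t$, and identify the $dB_t$- and $\widetilde{\mu}$-integrands of the resulting semimartingale decomposition with $m(t)$ and $n(t,e)$, giving (\ref{equ 2018101804}); matching the drift against the $q$-dynamics then gives (\ref{equ 2018101803}). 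Substituting (\ref{equ 2018101804}) into the stationarity relation (\ref{equ 2018101802}) produces the feedback expression (\ref{equ 2018101805}), while solving (\ref{equ 2018101803}) for $u$ gives the second expression (\ref{equ 2018101806}). Equating the two and matching the coefficient of $x(t)$ against the $x$-independent part yields the decoupled linear ODE system (\ref{equ 2018101807}), whose terminal data $\varphi_T=1$ and $\psi_T=-a-(\alpha+\widetilde{\alpha})p(T)$ are forced by the adjoint terminal condition $q(T)=x(T)-a-(\alpha+\widetilde{\alpha})p(T)$. Integrating these scalar linear equations explicitly delivers the closed forms (\ref{equ 2018101808}) and (\ref{equ 2018101809}).

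The main obstacle is precisely this degeneracy coming from $H$ being affine in $v$: the first-order condition cannot be inverted for $u$ directly, so the crux of the argument is the self-consistency between the two feedback representations (\ref{equ 2018101805}) and (\ref{equ 2018101806}), which is exactly what generates the Riccati-type equation for $\varphi_t$ and the linear equation for $\psi_t$. A secondary point I would check is admissibility: since $\rho_t,\mu_t,\sigma_t,\eta(t,e)$ are bounded and $\Lambda_t=\sigma_t^2+\int_E\eta^2(t,e)\lambda(de)$ stays bounded below away from zero, while $\varphi_t,\psi_t,p(t)$ are bounded deterministic functions, the feedback $u$ in (\ref{equ 2018101810}) indeed lies in $\mathcal{U}_{ad}$. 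With $u$ so constructed satisfying (\ref{smp}) and all convexity conditions in force, Theorem 5.2 yields that $u$ is optimal, which completes the proof.
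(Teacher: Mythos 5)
Your proposal is correct and follows essentially the same route as the paper: specialize the adjoint system (\ref{Eq6.5}) to (\ref{equ 2018012410}), integrate the $p$-equation, use the linearity of $H$ in $v$ to reduce (\ref{smp}) to the stationarity relation (\ref{equ 2018101802}), posit the affine ansatz $q(t)=\varphi_t x(t)+\psi_t$, match the martingale and drift parts to obtain (\ref{equ 2018101804})--(\ref{equ 2018101806}) and hence the ODE system (\ref{equ 2018101807}), and conclude optimality via Theorem 5.2. Your explicit verification of the four convexity hypotheses and of admissibility of the feedback control merely fills in details that the paper dispenses with by saying the assumptions of Section 5 are easily verified.
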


\subsection{Application to linear-quadratic optimal control problem}
Now we consider an example of linear-quadratic stochastic control problem. The dynamic of our problem is the following linear
mean-field FBSDEs with jumps
\begin{equation}\label{eg2.1}
\left\{
\begin{aligned}
dx^v(t)=&\ \{ax^v(t)+\widetilde{a}E[x^v(t)]\}dt+\{bx^v(t)+Bv(t)\}dB_t+\int_E\{L(e)v(t)\}\widetilde{\mu}(dtde),\\
-dy^v(t)=&\ \{cx^v(t)+\widetilde{c}E[x^v(t)]+ly^v(t)+\widetilde{l}E[y^v(t)]+Dv(t)\}dt-z^v(t)dB_t-\int_Ek^v(t,e)\widetilde{\mu}(dtde),\\
x^v(0)=&\ a,\ \ y^v(T)=x^v(T),
\end{aligned}
\right.
\end{equation}
where $a,\widetilde{a},b,B,c,\widetilde{c},l,\widetilde{l},D$ are constants, $L(e)$ is bounded deterministic function and $v\in\mathcal{U}_{ad}$.

The cost functional is a quadratic one, and it has the form
\begin{equation}\label{eg2.2}
\begin{aligned}
 J(v(\cdot))=\frac{1}{2}\int_0^TRE[x^v(t)]^2dt+\frac{1}{2}NE[x^v(T)]^2+\frac{1}{2}QE[y^v(0)]^2,
\end{aligned}
\end{equation}
where $R,N,Q$ are positive constants. Then the related Hamiltonian function has the following form
\begin{equation}\label{eg2.3}
H(x,\widetilde{x},y,\widetilde{y},v,p,q,m,n,e)=q(ax+\widetilde{a}\widetilde{x})+m(bx+Bv)+nL(e)v-p(cx+\widetilde{c}\widetilde{x}+ly+\widetilde{l}\widetilde{y}
+Dv)+\frac{1}{2}Rx^2.
\end{equation}
The adjoint equation can be written as
\begin{equation}\label{eg2.4}
\left\{
\begin{aligned}
dp(t)=&\ (l+\widetilde{l})p(t)dt,\\
-dq(t)=&\ \{aq(t)+bm(t)-cp(t)+Rx(t)+\widetilde{a}E[q(t)]-\widetilde{c}p(t)\}dt-m(t)dB_t-\int_En(t,e)\widetilde{\mu}(dtde),\\
p(0)=&\ -Qy(0),\ \ q(T)=Nx(T)-p(T).
\end{aligned}
\right.
\end{equation}
Then, $p(t)=-Qy(0)\exp{(l+\widetilde{l})t}$, $t\in[0,T]$.\\
From (\ref{smp}), we have
\begin{equation}\label{eg2.5}
Bm(t)+\int_En(t,e)L(e)\lambda(de)-p(t)D=0.
\end{equation}
We assume
\begin{equation}\label{eg2.6}
q(t)=\phi(t)x(t)+\psi(t)E[x(t)]+\theta(t),
\end{equation}
where $\phi(t)$, $\psi(t)$, $\theta(t)$ are deterministic differentiable functions.
Applying It\^o's formula to (\ref{eg2.6}), from (\ref{eg2.1}) we have
\begin{equation}\label{eg2.7}
\begin{aligned}
dq(t)=&\ \Big\{\big[\dot{\phi}(t)+a\phi(t)\big]x(t)+\big[\widetilde{a}\phi(t)+\dot{\psi}(t)+(a+\widetilde{a})\psi(t)\big]E[x(t)]+\dot{\theta}(t)\Big\}dt\\
&\ +\phi(t)[bx(t)+Bu(t)]dB_t+\int_E\phi(t)L(e)u(t)\widetilde{\mu}(dtde).
\end{aligned}
\end{equation}
Compared with (\ref{eg2.4}), we obtain
\begin{equation}\label{eg2.8}
m(t)=\phi(t)[bx(t)+Bu(t)],
\end{equation}
\begin{equation}\label{eg2.9}
n(t,e)=\phi(t)L(e)u(t),
\end{equation}
\begin{equation}\label{eg2.10}
-aq(t)-bm(t)+(c+\widetilde{c})p(t)-Rx(t)-\widetilde{a}E[q(t)]=\big[\dot{\phi}(t)+a\phi(t)\big]x(t)+\big[\widetilde{a}\phi(t)+\dot{\psi}(t)+(a+\widetilde{a})\psi(t)\big]E[x(t)]+\dot{\theta}(t)
\end{equation}
Substituting (\ref{eg2.8}), (\ref{eg2.9}) into (\ref{eg2.5}), we get
\begin{equation}\label{eg2.11}
\phi(t)u(t)=\frac{1}{\Lambda}[p(t)D-Bb\phi(t)x(t)],
\end{equation}
where $\Lambda=B^2+\int_EL^2(e)\lambda(de)$. Then from (\ref{eg2.10}) and (\ref{eg2.11}), we have
\begin{equation}\label{eg2.12}
\begin{aligned}
\Big[\dot{\phi}(t)+(2a+b^2-\frac{B^2b^2}{\Lambda})\phi(t)+R\Big]x(t)
+\Big[\dot{\psi}(t)+(2a+2\widetilde{a})\psi(t)+2\widetilde{a}\phi(t)\Big]E[x(t)]\\
+\dot{\theta}(t)+(a+\widetilde{a})\theta(t)+\frac{bBD}{\Lambda}p(t)-(c+\widetilde{c})p(t)=0.
\end{aligned}
\end{equation}
Noting the terminal condition in (\ref{eg2.4}), we get
\begin{equation}\label{eg2.13}
\begin{aligned}
&\dot{\phi}(t)+(2a+b^2-\frac{B^2b^2}{\Lambda})\phi(t)+R=0,\ \phi(T)=N,\\
&\dot{\psi}(t)+(2a+2\widetilde{a})\psi(t)+2\widetilde{a}\phi(t)=0,\ \psi(T)=0,\\
&\dot{\theta}(t)+(a+\widetilde{a})\theta(t)+\frac{bBD}{\Lambda}p(t)-(c+\widetilde{c})p(t)=0,\ \theta(T)=-p(T).
\end{aligned}
\end{equation}
The solutions of these equations are
\begin{equation}\label{eg2.14}
\begin{aligned}
&\phi(t)=[N+\frac{R}{2a^2+b^2-\frac{b^2B^2}{\Lambda}}]\exp\{(2a^2+b^2-\frac{b^2B^2}{\Lambda})(T-t)\}-\frac{R}{2a^2+b^2-\frac{b^2B^2}{\Lambda}},\\
&\psi(t)=\exp\{-2(a+\widetilde{a})t\}\int_t^T2\widetilde{a}\phi(s)\exp\{2(a+\widetilde{a})s\}ds,\\
&\theta(t)=-p(T)exp\{(a+\widetilde{a})(T-t)\}-\int_t^T[c+\widetilde{c}-\frac{bBD}{\Lambda}]p(s)\exp\{(a+\widetilde{a})s\}ds.
\end{aligned}
\end{equation}
Finally, since the assumptions of Theorem 5.2 are satisfied in our case, we get the following result.
\begin{theorem}
The optimal solution $u$ of our linear-quadratic control problem (\ref{eg2.1}) and (\ref{eg2.2}) is given (in feedback form) by
\begin{equation}\label{eg2.11}
u(t)=\frac{1}{\Lambda\phi(t)}[p(t)D-Bb\phi(t)x(t)],
\end{equation}
where $\Lambda=B^2+\int_EL^2(e)\lambda(de)$,
 $p(t)=-Qy(0)\exp{(l+\widetilde{l})t}$, $\phi(t)$ is given by (\ref{eg2.14}).
\end{theorem}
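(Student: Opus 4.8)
The plan is to recognize that the linear-quadratic problem (\ref{eg2.1})--(\ref{eg2.2}) is a special instance of the control framework of Section 5, and to obtain the optimal feedback by combining the maximum principle of Theorem 5.1 with the sufficiency result of Theorem 5.2, the explicit form being produced by decoupling the Hamiltonian system through a linear ansatz. First I would check that the data satisfy the standing hypotheses: since the drift, diffusion and jump coefficients of (\ref{eg2.1}) and the generator $f$ are affine, all of $b,\sigma,h,f$ are continuously differentiable with bounded derivatives, $\Phi(x,\widetilde{x})=x$ is linear, and $\varphi,\gamma$ together with the running cost are quadratic with positive weights $R,N,Q$, hence convex; moreover the Hamiltonian (\ref{eg2.3}) is convex in $(x,y,z,k,\widetilde{x},\widetilde{y},\widetilde{z},\widetilde{k},v)$ because its only nonlinear term is $\frac{1}{2}Rx^2$ with $R>0$. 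This verifies (H5.1) and the convexity conditions (1)--(4) of Theorem 5.2.

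Next I would write down the adjoint system (\ref{eg2.4}) as the specialization of the general adjoint equation (\ref{Eq6.5}), and solve the forward linear equation for $p$ explicitly, obtaining $p(t)=-Qy(0)\exp\{(l+\widetilde{l})t\}$. Because $H$ is affine in the control, the necessary condition (\ref{smp}) of Theorem 5.1 collapses to the pointwise stationarity $H_v=0$, i.e. the algebraic relation (\ref{eg2.5}) linking $m$, $n$ and $p$. To turn this into a closed-form feedback I would postulate the affine representation (\ref{eg2.6}), $q(t)=\phi(t)x(t)+\psi(t)E[x(t)]+\theta(t)$ with deterministic differentiable $\phi,\psi,\theta$; applying It\^o's formula along the state dynamics gives (\ref{eg2.7}), and matching the $dB_t$ and $\widetilde{\mu}(dtde)$ coefficients against (\ref{eg2.4}) identifies $m$ and $n$ in (\ref{eg2.8})--(\ref{eg2.9}), while matching the drift yields (\ref{eg2.10}).

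Substituting the expressions for $m$ and $n$ into the stationarity relation (\ref{eg2.5}) and solving for the control produces the feedback law $\phi(t)u(t)=\Lambda^{-1}[p(t)D-Bb\phi(t)x(t)]$; inserting this back into the drift-matching identity (\ref{eg2.10}) and separately collecting the coefficients of $x(t)$, of $E[x(t)]$, and of the remaining deterministic term decouples the problem into the three ODEs (\ref{eg2.12})--(\ref{eg2.13}) with terminal data read off from $q(T)=Nx(T)-p(T)$, whose explicit solutions are (\ref{eg2.14}). Dividing by $\phi(t)$ then gives the stated feedback. The final step is to invoke Theorem 5.2: since all of its hypotheses were verified in the first paragraph, the candidate $u$ is genuinely optimal.

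The main obstacle I anticipate is not any single computation but the well-posedness of the decoupling: I must ensure that $\Lambda=B^2+\int_E L^2(e)\lambda(de)>0$ so that (\ref{eg2.5}) is actually solvable for $v$, and that the Riccati-type equation for $\phi$ admits a solution that stays nonzero on $[0,T]$, since the feedback divides by $\phi(t)\Lambda$. Establishing that the affine ansatz (\ref{eg2.6}) is consistent --- that the three scalar ODEs admit global bounded solutions on $[0,T]$ --- is what makes the explicit formula legitimate, while the Hamiltonian convexity is precisely what upgrades the necessary condition to a genuine minimum through Theorem 5.2.
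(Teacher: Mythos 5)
Your proposal is correct and follows essentially the same route as the paper: specialize the adjoint equation (\ref{Eq6.5}) to get (\ref{eg2.4}), solve for $p$, reduce the maximum condition (\ref{smp}) to the stationarity relation (\ref{eg2.5}), decouple via the affine ansatz (\ref{eg2.6}) with It\^o's formula and coefficient matching to obtain (\ref{eg2.8})--(\ref{eg2.14}), and finally invoke Theorem 5.2 for sufficiency. Your added attention to $\Lambda>0$ and to $\phi(t)$ remaining nonzero on $[0,T]$ (so that the feedback formula is well defined) is a point the paper passes over silently, and is a worthwhile refinement rather than a deviation.
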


\section{{ Appendix}}
\begin{lemma}\label{Lem3.1}
Assume (H3.1) and (H3.2) hold. If for an $\alpha_0\in[0,1)$ there exists a solution $(x^{\alpha_0}, y^{\alpha_0}, z^{\alpha_0},k^{\alpha_0})$ of
equation (\ref{Eq3.2}), then there exists a positive constant $\delta_0$ such that, for each $\delta\in[0, \delta_0]$ there exists a solution
$(x^{\alpha_0+\delta}, y^{\alpha_0+\delta}, z^{\alpha_0+\delta}, k^{\alpha_0+\delta})$\ of mean-field FBSDE with jumps (\ref{Eq3.2}) for $\alpha=\alpha_0+\delta$.
\end{lemma}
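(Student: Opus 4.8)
The plan is to prove this continuation step by a Banach fixed-point argument in the parameter increment $\delta$, in the spirit of the purely probabilistic continuation method of \cite{PW} adapted to the present mean-field/jump setting. Work in the Banach space $\mathbb{M}:=\mathcal{H}^2_{\mathbb{F}}(0,T;\mathbb{R}^n\times\mathbb{R}^m\times\mathbb{R}^{m\times d})\times\mathcal{K}^2_{\mathbb{F},\lambda}(0,T;\mathbb{R}^m)\times L^2(\Omega,\mathcal{F}_T,P;\mathbb{R}^n)$, the last slot carrying a candidate terminal value, exactly as the solution tuple is tracked in Theorem~\ref{Thm4.1}. Fix $\alpha_0\in[0,1)$ at which (\ref{Eq3.2}) is solvable for arbitrary free data in the prescribed spaces; note that uniqueness at $\alpha_0$, hence single-valuedness of the associated solution operator, follows from the same monotonicity a priori estimate used in the uniqueness part of Theorem~\ref{Thm3.1}. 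For $\delta>0$ I would define a map $\mathcal{I}^{\alpha_0+\delta}$ on $\mathbb{M}$ as follows: given an input $(\widetilde x,\widetilde y,\widetilde z,\widetilde k,\widetilde X_T)$, build the argument $\widetilde\chi$ from it and let $(x,y,z,k)$ be the unique solution of (\ref{Eq3.2}) at parameter $\alpha_0$ in which the free terms are augmented by the $\delta$-contributions coming from the input: $E'[\phi(t)]$ is replaced by $E'[\phi(t)]+\delta\int_EE'[b(t,\widetilde\chi(t,e))]\lambda(de)$ (and similarly for $\psi,\varphi$), $E'[\gamma(t)]$ by $E'[\gamma(t)]-\delta\beta_1 G\widetilde x(t)+\delta\int_EE'[f(t,\widetilde\chi(t,e))]\lambda(de)$, and the terminal datum $\xi$ by $\xi+\delta E'[\Phi(\widetilde X_T,(\widetilde X_T)')]-\delta G\widetilde X_T$; set the output terminal slot to $x(T)$. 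By (H3.1) the augmented data still lie in the required spaces, so the solvability hypothesis at $\alpha_0$ makes $\mathcal{I}^{\alpha_0+\delta}$ well defined, and a direct rearrangement shows that a fixed point (which in particular forces $\widetilde X_T=x(T)$) is precisely a solution of (\ref{Eq3.2}) at $\alpha=\alpha_0+\delta$.

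The heart of the matter is a contraction estimate. Taking two inputs with outputs $(x^i,y^i,z^i,k^i)$, $i=1,2$, and writing $\widehat l=l^1-l^2$ for all hatted differences (both output and input), I would apply It\^o's formula to $\langle G\widehat x(s),\widehat y(s)\rangle$ on $[0,T]$ and take expectations; since $\widehat x(0)=0$ the initial boundary term drops. On the terminal side one uses the monotonicity (H3.2)(ii) together with the Lipschitz property of $\Phi$ in $\widetilde x$ to produce the factor $(\mu_1-L_\Phi\lambda_1)E|\widehat x(T)|^2$, exactly as in the uniqueness argument of Theorem~\ref{Thm3.1}; on the drift side the martingale cross-variation terms reconstruct the quantity $\langle A(t,\lambda)-A(t,\bar\lambda),\lambda-\bar\lambda\rangle$, to which (H3.2)(i) applies and contributes $-\beta_1|\widehat x|^2-\beta_2(|\widehat y|^2+|\widehat z|^2)-\beta_3\int_E|\widehat k(e)|^2\lambda(de)$, the primed mean-field arguments being controlled by the Lipschitz constants $L_A,L_\Phi$ and the constants $C_0,\lambda_1$. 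Every term generated by the $\delta$-perturbation is of the form $\delta$ times an inner product of an output difference with a Lipschitz function of the input differences, hence is dominated, via Young's inequality, by $\varepsilon\,\mathcal{E}_{\mathrm{out}}+C\varepsilon^{-1}\delta^2\,\mathcal{E}_{\mathrm{in}}$ for any $\varepsilon>0$, where $\mathcal{E}_{\mathrm{out}}:=E\int_0^T\big(|\widehat x(t)|^2+|\widehat y(t)|^2+|\widehat z(t)|^2+\int_E|\widehat k(t,e)|^2\lambda(de)\big)\,dt+E|\widehat x(T)|^2$ and $\mathcal{E}_{\mathrm{in}}$ is the analogue formed from $\widehat{\widetilde x},\widehat{\widetilde y},\widehat{\widetilde z},\widehat{\widetilde k}$.

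To close the estimate I would combine this monotonicity inequality with the two standard a priori estimates of the forward and backward components, in the spirit of (\ref{equ 2018103103}) and (\ref{equ 2018103102}): one bounds $\sup_t E|\widehat x(t)|^2$ by the $(\widehat y,\widehat z,\widehat k)$-norms plus the $\delta$-data, the other bounds the $(\widehat y,\widehat z,\widehat k)$-norms by $E\int_0^T|\widehat x(t)|^2dt+E|\widehat x(T)|^2$ plus the $\delta$-data. Under either alternative (1) or (2) of (H3.2), the quantities $\beta_1-L_AC_0$, $\beta_2-L_AC_0$, $\beta_3-L_A$ and $\mu_1-L_\Phi\lambda_1$ enjoy the strict positivity needed to absorb the $\varepsilon$-terms, and one arrives at
\begin{equation*}
\mathcal{E}_{\mathrm{out}}\leq C\,\delta^2\,\mathcal{E}_{\mathrm{in}},
\end{equation*}
with a constant $C$ depending only on the Lipschitz constants, $\beta_1,\beta_2,\beta_3,\mu_1,\lambda_1$ and $T$, and in particular independent of $\alpha_0$. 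Choosing $\delta_0>0$ with $C\delta_0^2\leq\frac14$ makes $\mathcal{I}^{\alpha_0+\delta}$ a contraction on $\mathbb{M}$ for every $\delta\in[0,\delta_0]$, so the Banach fixed-point theorem yields a unique fixed point, hence the desired solution of (\ref{Eq3.2}) at $\alpha_0+\delta$. The decisive point, and the main technical obstacle, is precisely to establish this estimate with $C$ \emph{independent of $\alpha_0$}: this demands careful bookkeeping of the primed mean-field arguments under $E'$ when applying It\^o's formula and the monotonicity conditions, and it is exactly what allows Theorem~\ref{Thm3.1} to reach $\alpha=1$ in finitely many steps of equal length.
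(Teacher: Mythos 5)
Your proposal follows essentially the same route as the paper's Appendix proof: the same solution operator $I_{\alpha_0+\delta}$ on $\mathcal{H}^2_{\mathbb{F}}\times\mathcal{K}^2_{\mathbb{F},\lambda}\times L^2(\Omega,\mathcal{F}_T,P)$ built from the solvability at $\alpha_0$ with the $\delta$-augmented free data, the same It\^{o}/monotonicity estimate on $\langle G\widehat{X},\widehat{Y}\rangle$ combined with the standard forward and backward a priori estimates, and the same Banach fixed-point conclusion. The only cosmetic difference is that your fixed-$\varepsilon$ Young inequality produces a contraction bound of the form $C\delta^2\mathcal{E}_{\mathrm{in}}$, whereas the paper's balanced Young inequality yields $\bar{C}\delta\,\mathcal{E}_{\mathrm{in}}$ with $\delta_0=\frac{1}{2\bar{C}}$; both suffice to conclude.
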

\begin{proof} Since there
exists a (unique) solution of equation (\ref{Eq3.2}) for every $\phi\in \mathcal{H}_{\bar{\mathbb{F}}}^2(0,T;\mathbb{R}^n),\ \gamma\in \mathcal{H}_{\bar{\mathbb{F}}}^2(0,T;\mathbb{R}^m),$ $\psi\in \mathcal{H}_{\bar{\mathbb{F}}}^2(0,T,\mathbb{R}^{n\times d}),\ \varphi\in\mathcal{K}^2_{\bar{\mathbb{F}},\lambda}(0,T;\mathbb{R}^n),\  \alpha_0\in[0,1)$, then for each $x(T)\in L^2(\Omega, \mathcal{F}_T, P)$ and a quadruple $(\lambda(t,e))_{0\leq t\leq T}=(x(t),y(t),z(t),k(t,e))_{0\leq t\leq T}\in \mathcal{H}_{\mathbb{F}}^2(0,T;\mathbb{R}^{n+m+m \times d})\times\mathcal{K}^2_{\mathbb{F},\lambda}(0,T;\mathbb{R}^m)$ and $\delta>0$,
the following mean-field FBSDE with jumps
\begin{equation}
\left\{
\begin{aligned}
dX(t)=&\ \[\alpha_0\int_E E'[b(t,\Lambda(t,e),(\Lambda(t,e))')]\lambda(de)+\delta \int_EE'[b(t,\lambda(t,e),(\lambda(t,e))')]\lambda(de)+E'[\phi(t)]\]dt\\
      &\  +\[\alpha_0\int_E E'[\sigma(t,\Lambda(t,e),(\Lambda(t,e))')]\lambda(de)+\delta \int_E E'[\sigma(t,\lambda(t,e),(\lambda(t,e))')]\lambda(de)+E'[\psi(t)]\]dB_t\\
      &\ +\int_E \Big[\alpha_0 E'[h(t,\Lambda(t,e),(\Lambda(t,e))',e)]+\delta E'[h(t,\lambda(t,e),(\lambda(t,e))',e)]+E'[\varphi(t,e)]\Big]\widetilde{\mu}(dt,de),\\
-dY(t)=&\ \[(1-\alpha_0)\beta_1GX(t)+\alpha_0\int_EE'[f(t,\Lambda(t,e),(\Lambda(t,e))')]\lambda(de)+\delta\big(-\beta_1Gx(t)\\
       &\ \ \ + \int_EE'[f(t,\lambda(t,e),(\lambda(t,e))')]\lambda(de)\big)+E'[\gamma(t)]\]dt -Z(t)dB_t-\int_EK(t,e)\widetilde{\mu}(dt,de),\\
X(0)=&\ a,\\
Y(T)=&\ \alpha_0 E'[\Phi(X(T),(X(T))')]+(1-\alpha_0)GX(T) +\delta\(E'[\Phi(x(T),(x(T))')]-Gx(T)\)+\xi,
\end{aligned}
\right.
\end{equation}
 exists
a unique solution
$$(\Lambda(t,e))_{0\leq t\leq T}=(X(t),Y(t),Z(t),K(t,e))_{0\leq t\leq T}\in \mathcal{H}_{\mathbb{F}}^2(0,T;\mathbb{R}^{n+m+{m \times d}})\times\mathcal{K}^2_{\mathbb{F},\lambda}(0,T;\mathbb{R}^m).$$
We now prove that  the mapping $I_{\alpha_0+\delta}$ defined by
$$I_{\alpha_0+\delta}(\lambda\times x(T))=\Lambda \times X(T):$$
$$\mathcal{H}_{\mathbb{F}}^2(0,T;\mathbb{R}^{n+m+m \times d})\times\mathcal{K}^2_{\mathbb{F},\lambda}(0,T;\mathbb{R}^m)\times L^2(\Omega,\mathcal{F}_T,P)\mapsto \mathcal{H}_{\mathbb{F}}^2(0,T;\mathbb{R}^{n+m+m \times d})\times\mathcal{K}^2_{\mathbb{F},\lambda}(0,T;\mathbb{R}^m)\times L^2(\Omega,\mathcal{F}_T,P)$$
 is a contraction when $\delta$ is small enough. For any $\bar{\lambda}=(\bar{x},\bar{y},\bar{z},\bar{k})\in \mathcal{H}_{\mathbb{F}}^2(0,T;\mathbb{R}^{n+m+m \times d})\times\mathcal{K}^2_{\mathbb{F},\lambda}(0,T;\mathbb{R}^m)$ and $\bar{x}(T)\in L^2(\Omega, \mathcal{F}_T, P)$, we denote
$$\bar{\Lambda}\times \bar{X}(T)=I_{\alpha_0+\delta}(\bar{\lambda}\times \bar{x}(T)),\
\widehat{\lambda}=(\widehat{x},\widehat{y},\widehat{z},\widehat{k})=(x-\bar{x},y-\bar{y},z-\bar{z},k-\bar{k}),$$ 
$$\widehat{\Lambda}=(\widehat{X},\widehat{Y},\widehat{Z},\widehat{K})=(X-\bar{X},Y-\bar{Y},Z-\bar{Z},K-\bar{K}). $$ 
Applying It\^{o}'s formula to $\langle G\widehat{X}(t),\widehat{Y}(t)\rangle$ it yields
 \begin{equation}\nonumber
  \begin{aligned}
 &\ E \Big\langle \alpha_0E'[\Phi(X(T),(X(T))')-\Phi(\bar{X}(T),(\bar{X}(T))')]+(1-\alpha_0)G\widehat{X}(T)\\
&\ +\delta\Big(E'[\Phi(x(T),(x(T))')-\Phi(\bar{x}(T),(\bar{x}(T))')]-G\widehat{x}(T)\Big) ,G\widehat{X}(T) \Big\rangle \\
=&\ E \int_0^T\int_E \big\langle\alpha_0E'[A(t,\Lambda(t,e),(\Lambda(t,e))')-A(t,\bar{\Lambda}(t,e),(\bar{\Lambda}(t,e))')],
\widehat{\Lambda}(t,e) \big\rangle\lambda(de)dt\\
&\ -E\int_0^T(1-\alpha_0)\beta_1\langle G\widehat{X}(t),G\widehat{X}(t)\rangle dt +E\int_0^T\delta \beta_1\langle G\widehat{X}(t),G\widehat{x}(t)\rangle dt\\
&\ + E\int_0^T\int_E\delta E'\[\langle G\widehat{X}(t),\widehat{f}(t,e)\rangle
+\langle G^T\widehat{Y}(t),\widehat{b}(t,e)\rangle+\langle G^T\widehat{Z}(t),\widehat{\sigma}(t,e)\rangle+\langle G^T\widehat{K}(t,e),\widehat{h}(t,e)\rangle\]\lambda(de) dt,
\end{aligned}
\end{equation}
where
\begin{equation}\nonumber
\begin{aligned}
\widehat{b}(t,e)=&\ b(t,\lambda(t,e),(\lambda(t,e))')-b(t,\bar{\lambda}(t,e),(\bar{\lambda}(t,e))'),\\
\widehat{\sigma}(t,e)=&\ \sigma(t, \lambda(t,e), (\lambda(t,e))')-\sigma(t, \bar{\lambda}(t,e), (\bar{\lambda}(t,e))'),\\
\widehat{h}(t,e)=&\ h(t, \lambda(t,e), (\lambda(t,e))',e)-h(t, \bar{\lambda}(t,e), (\bar{\lambda}(t,e))',e),\\
\widehat{f}(t,e)=&\ -f(t,\lambda(t,e),(\lambda(t,e))')+f(t,\bar{\lambda}(t,e), (\bar{\lambda}(t,e))').
\end{aligned}
\end{equation}
From the assumptions (H3.1) and (H3.2), we know\\
(1) if $\beta_1-C_0L_A=0,\ \mu_1-L_\Phi\lambda_1>0$, $\beta_2-C_0L_A>0$, $\beta_3-C_0L_A>0$, then we have
\begin{equation}\label{equ 2018110101}
\begin{aligned}
 &\ E\[\int_0^T (|\widehat{Y}(t)|^2+|\widehat{Z}(t)|^2+\int_E|\widehat{K}(t,e)|^2\lambda(de)) dt\]\\
 \leq&\ \delta C_2E\Big\{\int_0^T\(|\widehat{X}(t)|^2+|\widehat{Y}(t)|^2+|\widehat{Z}(t)|^2+\int_E|\widehat{K}(t,e)|^2\lambda(de)\)dt+|\widehat{X}(T)|^2
 +|\widehat{x}(T)|^2\\
&\ + \int_0^T\(|\widehat{x}(t)|^2+|\widehat{y}(t)|^2+|\widehat{z}(t)|^2+\int_E|\widehat{k}(t,e)|^2\lambda(de)\)dt\Big\}.
\end{aligned}
\end{equation}
On the other hand, from standard technique to the forward equation for $\widehat{X}(t)=X(t)-\bar{X}(t)$, we get
 \begin{equation}\label{equ 2018110102}
  \begin{aligned}
\sup_{0\leq t\leq T}E[|\widehat{X}(t)|^2] &\ \leq \delta C_2E\[\int_0^T\(|\widehat{x}(t)|^2+|\widehat{y}(t)|^2+|\widehat{z}(t)|^2+\int_E|\widehat{k}(t,e)|^2\lambda(de)\)dt\]\\
 &\ +C_2E\[\int_0^T\(|\widehat{Y}(t)|^2+|\widehat{Z}(t)|^2+\int_E|\widehat{K}(t,e)|^2\lambda(de)\)dt\].
\end{aligned}
\end{equation}
From (\ref{equ 2018110101}) and (\ref{equ 2018110102}) we get
\begin{equation}\label{equ 2018110103}
\begin{aligned}
&\ E\Big\{\int_0^T\(|\widehat{X}(t)|^2+|\widehat{Y}(t)|^2+|\widehat{Z}(t)|^2+\int_E|\widehat{K}(t,e)|^2\lambda(de)\)dt
+|\widehat{X}(T)|^2\Big\}\\
\leq&\ \bar{C}\delta E\Big\{\int_0^T\(|\widehat{x}(t)|+|\widehat{y}(t)|^2+|\widehat{z}(t)|^2+\int_E|\widehat{k}(t,e)|^2\lambda(de)\)dt
+|\widehat{x}(T)|^2\Big\}.
\end{aligned}
\end{equation}
Here the constant $\bar{C}$ depends on the Lipschitz constants, $\lambda_1,\ \beta_1,\ \beta_2,\ \beta_3,\ C_0$ and $T$.  \\
(2) If $\beta_1-C_0L_A>0$, $\beta_2-C_0L_A\geq0$, $\beta_3-L_A\geq0$, $\mu_1-L_\Phi\lambda_1>0$, then we have
  \begin{equation}\label{equ 2018110104}
  \begin{aligned}
 &\ E[|\widehat{X}(T)|^2]+E[\int_0^T |\widehat{X}(t)|^2 dt]\\
 \leq&\ \delta C_1E\Big\{\int_0^T\(|\widehat{X}(t)|^2+|\widehat{Y}(t)|^2+|\widehat{Z}(t)|^2+
 \int_E|\widehat(K)(t,e)|^2\lambda(de)\)dt
 +|\widehat{X}(T)|^2\\
&\ +\int_0^T\(|\widehat{x}(t)|^2+|\widehat{y}(t)|^2+|\widehat{z}(t)|^2+\int_E|\widehat{k}(t,e)|^2\lambda(de)\)dt
+|\widehat{x}(T)|^2\Big\}.
  \end{aligned}
  \end{equation}
From the standard estimate of the mean-field BSDE part, we get
\begin{equation}\label{equ 2018110105}
\begin{aligned}
&\ E\[\int_0^T\(|\widehat{Y}(t)|^2+|\widehat{Z}(t)|^2+\int_E|\widehat{K}(t,e)|^2\lambda(de)\)dt\]\\
\leq&\ C_1\delta E\Big\{\int_0^T\(|\widehat{x}(t)|^2+|\widehat{y}(t)|^2+|\widehat{z}(t)|^2+\int_E|\widehat{k}(t,e)|^2\lambda(de)\)dt
+|\widehat{x}(T)|^2\Big\}+C_1\Big\{E\int_0^T|\widehat{X}(t)|^2dt+E|\widehat{X}(T)|^2\Big\}.
\end{aligned}
\end{equation}
Here the constant $C_1$ depends on the Lipschitz constants, $\lambda_1,\ \beta_1,\ \mu_1,\ C_0,\ \alpha_0$, and $T$.\\
From (\ref{equ 2018110104}), (\ref{equ 2018110105}) and the standard estimate of $\widehat{X}(t)$, it follows that, for the sufficiently small $\delta>0$,
\begin{equation}\label{equ 2018110106}
\begin{aligned}
&\ E\Big\{\int_0^T\(|\widehat{X}(t)|^2+|\widehat{Y}(t)|^2+|\widehat{Z}(t)|^2+\int_E|\widehat{K}(t,e)|^2\lambda(de)\)dt
+|\widehat{X}(T)|^2\Big\}\\
\leq&\ \bar{C}\delta E\Big\{\int_0^T\(|\widehat{x}(t)|^2+|\widehat{y}(t)|^2+|\widehat{z}(t)|^2+\int_E|\widehat{k}(t,e)|^2\lambda(de)\)dt
+|\widehat{x}(T)|^2\Big\}.
\end{aligned}
\end{equation}
Here the constant $\bar{C}$ depends only on the Lipschitz constants, $\lambda_1,\ \beta_1,\ \mu_1,\ \alpha_0$ and $T$. \\
From above all, we now choose $\delta_0=\frac{1}{2\bar{C}}$ in (\ref{equ 2018110103}) and (\ref{equ 2018110106}). Obviously, for every fixed $\delta\in[0,\delta_0]$,
the mapping $I_{\alpha_0+\delta}$ is a contraction in the sense that
\begin{equation}\nonumber
\begin{aligned}
&\ E\Big\{\int_0^T\(|\widehat{X}(t)|^2+|\widehat{Y}(t)|^2+|\widehat{Z}(t)|^2+\int_E|\widehat{K}(t,e)|^2\lambda(de)\)dt
+|\widehat{X}(T)|^2\Big\}\\
\leq&\ \frac{1}{2} E\Big\{\int_0^T\(|\widehat{x}(t)|^2+|\widehat{y}(t)|^2+|\widehat{z}(t)|^2+\int_E|\widehat{k}(t,e)|^2\lambda(de)\)dt
+|\widehat{x}(T)|^2\Big\}.
\end{aligned}
\end{equation}
It means immediately that this mapping has a unique fixed point
$$\Lambda^{\alpha_0+\delta}=(X^{\alpha_0+\delta},Y^{\alpha_0+\delta},Z^{\alpha_0+\delta},K^{\alpha_0+\delta}),$$
which is the solution of equation (\ref{Eq3.2}) for $\alpha=\alpha_0+\delta$.
\end{proof}

\end{document}